\documentclass[reqno]{amsart}
\usepackage{amsmath, amssymb, amsfonts}

\newtheorem{theorem}{Theorem}
\newtheorem{corollary}{Corollary}
\newtheorem{lemma}{Lemma}
\newtheorem{proposition}{Proposition}
\theoremstyle{remark}
\newtheorem{remark}{Remark}
\theoremstyle{definition}
\newtheorem{define}{Definition}
\newtheorem{example}{Example}

\newtheorem*{Acknowlegement}{Acknowlegement}

\newcommand{\paren}[1]{\left(#1\right)}
\newcommand{\bra}[1]{\left\{#1\right\}}

\begin{document}
\date{}

\title[On the automorphism groups of models]{On the automorphism groups of models in $\mathbb C^2$}

\author{Ninh Van Thu* and Mai Anh Duc**} 
\thanks{The research of the first author was supported in part by an NRF grant
2011-0030044 (SRC-GAIA) of the Ministry of Education, The Republic of Korea. The research of the second author was supported in part by an NAFOSTED grant
of Vietnam.}

\address{*Center for Geometry and its Applications,
Pohang University of Science and Technology,  Pohang 790-784, The Republic of Korea - and - Department of Mathematics, Vietnam National University at Hanoi, 334 Nguyen Trai, Thanh Xuan, Hanoi, Vietnam}
\email{thunv@vnu.edu.vn, thunv@postech.ac.kr}

\address{**Department of Mathematics, Hanoi
National University of Education, 136 Xuan Thuy str., Hanoi, Vietnam}
\email{ducphuongma@gmail.com}

\subjclass[2000]{Primary 32M05; Secondary 32H02, 32H50, 32T25.}
\keywords{Automorphism group, model, finite type point.}

\maketitle      
\begin{abstract}
In this note, we consider models in $\mathbb C^2$. The purpose of this note is twofold. We first  show a characterization of models in $\mathbb C^2$ by their noncompact automorphism groups. Then we give an explicit description for automorphism groups of models in $\mathbb C^2$.
\end{abstract}

\section{Introduction}  
For a domain $\Omega$ in the complex Euclidean space $\mathbb C^n$, the set of biholomorphic self-maps forms a group under the binary operation of composition of mappings, which is called \emph{automorphism group} ($\mathrm{Aut}(\Omega))$. The topology on $\mathrm{Aut}(\Omega)$ is that of uniform convergence on compact sets (i.e., the compact-open topology).

A boundary point $p\in \partial \Omega$ is called a \emph{boundary orbit accumulation point} if there exist a sequence $\{f_j\}\subset \mathrm{Aut}(\Omega)$ and a point $q\in \Omega$ such that $f_j(q)\to p$ as $j\to \infty$. The classification of domains with noncompact automorphism groups is pertinent to the study of the geometry of the boundary at an orbit accumulation point.  

In this note, we consider a model 
$$
M_H=\{(z_1,z_2)\in\mathbb C^2:\mathrm{Re}~z_2+H(z_1)<0\},
$$
where $H$ is a homogeneous subharmonic polynomial of degree $2m~ (m\geq 1)$ which contains no harmonic terms. It is a well-known result of F. Berteloot \cite{Ber2} that if $\Omega\subset\mathbb C^2$ is pseudoconvex, of D'Angelo finite type near a boundary orbit accumulation point, then $\Omega$ is biholomorphically equivalent to a model $M_H$. For the case $\Omega$ is strongly pseudoconvex, this result was proved by B. Wong \cite{W} and J. P. Rosay \cite{R}; indeed, the model is biholomorphically equivalent to the unit ball.  These results motivate the following several concepts.

A domain $\Omega\subset \mathbb C^2$ is said to satisfy \emph{Condition $(M_H)$ at $p\in \partial \Omega$} if there exist neighborhoods $U$ and $V$ of $p$ and $(0,0)$, respectively; a biholomorphism $\Phi$ from $U\cap \Omega$ onto $V\cap M_H$, which  extends homeomophically to $U\cap\partial \Omega$ such that $\Phi(p)=(0,0)$. In this circumstance, we say that a sequence $\{\eta_n\}\subset U\cap \Omega$ \emph{ converges tangentially to order $s~ (s>0)$} to $p$ if $\mathrm{dist}(\Phi(\eta_n),\partial M_H)\approx |\Phi(\eta_n)_1|^s$, where $\mathrm{dist}(z,\partial M_H)$ is the Euclidean distance from $z$ to $\partial M_H$ and $\Phi(\eta_n)_1$ is the first coordinate of $\Phi(\eta_n)$. Here and in what follows, $\lesssim$ and $\gtrsim$ denote inequality up to a positive constant. Moreover, we will use $\approx $ for the combination of $\lesssim$ and $\gtrsim$.

We first prove the following theorem.
\begin{theorem}\label{T1}
Let $\Omega$ be a domain in $\mathbb C^2$ and let $p\in \partial\Omega$. Suppose that $\Omega$ satisfies Condition $(M_H)$ at $p$ and there exist a sequence $\{f_n\}\subset \mathrm{Aut}(\Omega)$ and $q\in \Omega$ such that $\{f_n(q)\}$ converges tangentially to order $\leq 2m~ (=\deg(H))$ to $p$. Then
$\Omega$ is biholomorphically equivalent to the model $M_{H}$.
\end{theorem}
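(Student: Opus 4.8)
The plan is to run a dilation (scaling) argument inside the model, transporting the orbit $\{f_n(q)\}$ into $M_H$ via the local equivalence $\Phi$ and then normalizing it with the explicit automorphisms that $M_H$ carries because $H$ is homogeneous. First I would transfer the data: for $n$ large, $f_n(q)\in U\cap\Omega$, so $\eta_n:=\Phi(f_n(q))=(a_n,b_n)\in V\cap M_H$ with $\eta_n\to(0,0)$ and $\operatorname{Im} b_n\to0$, and the tangential hypothesis reads $\epsilon_n:=-\rho(\eta_n)=-(\operatorname{Re} b_n+H(a_n))\approx|a_n|^s$ for some $s\le 2m$, where $\rho(z)=\operatorname{Re} z_2+H(z_1)$. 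The model admits the anisotropic dilations $\delta_t(z_1,z_2)=(tz_1,t^{2m}z_2)$ and the imaginary translations $\tau_c(z_1,z_2)=(z_1,z_2+ic)$, $c\in\RR$, as automorphisms: the first because $H(tz_1)=t^{2m}H(z_1)$ forces $\rho\circ\delta_t^{-1}=t^{-2m}\rho$, the second trivially.

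Next I would set $\sigma_n:=\delta_{t_n}\circ\tau_{-\operatorname{Im} b_n}\in\operatorname{Aut}(M_H)$ with $t_n:=\epsilon_n^{-1/(2m)}\to\infty$. A direct computation gives $\rho(\sigma_n(\eta_n))=t_n^{2m}\rho(\eta_n)=-1$, so $p_n:=\sigma_n(\eta_n)$ sits at unit depth; its first coordinate is $t_na_n$, of modulus $\approx|a_n|^{1-s/(2m)}$, and its second coordinate equals $-1-\epsilon_n^{-1}H(a_n)$ with $\epsilon_n^{-1}H(a_n)\approx|a_n|^{2m-s}$. Both stay bounded precisely because $s\le 2m$, so after passing to a subsequence $p_n\to p_\infty$ for some interior point $p_\infty\in M_H$. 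Since $t_n\to\infty$ and $\operatorname{Im} b_n\to0$, the expanding domains $\sigma_n(V\cap M_H)$ increase to all of $M_H$. \emph{This is the step where the hypothesis $s\le 2m$ is essential}: for $s>2m$ the first coordinate of $p_n$ would escape to infinity and the rescaled limit would degenerate away from $M_H$.

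With these normalizations I would consider $F_n:=\sigma_n\circ\Phi\circ f_n$, defined on $\Omega_n:=f_n^{-1}(U\cap\Omega)$, each a biholomorphism onto $\sigma_n(V\cap M_H)$ with $F_n(q)=p_n$. Two facts are needed: (i) the sets $\Omega_n$ exhaust $\Omega$, and (ii) $\{F_n\}$ is normal. For (i) one must force the \emph{whole} orbit, not merely $\{f_n(q)\}$, toward $p$; I would localize the Kobayashi/tautness estimates near $p$ through $\Phi$ and the finite-type geometry of $M_H$, so that $f_n\to p$ uniformly on compacta and $f_n^{-1}(U\cap\Omega)\supset K$ eventually for each compact $K\subset\Omega$. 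For (ii), since the images lie in the taut, Kobayashi-hyperbolic model $M_H$ and $F_n(q)=p_n$ remains in a compact part of $M_H$, a Montel/tautness argument yields a subsequence with $F_n\to F$; running the same construction for $G_n:=F_n^{-1}=f_n^{-1}\circ\Phi^{-1}\circ\sigma_n^{-1}$ produces a limit $G$.

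Finally I would identify $F$ as a biholomorphism of $\Omega$ onto $M_H$. Because $F(q)=p_\infty$ is interior, $F$ is nonconstant; a Hurwitz-type argument together with uniform lower bounds on $\det F_n'$ transported from the fixed base point $p_n$ shows $F$ is locally biholomorphic and injective, while the relations $G_n\circ F_n=\mathrm{id}$ and $F_n\circ G_n=\mathrm{id}$ pass to the limit to give $G\circ F=\mathrm{id}_\Omega$ and $F\circ G=\mathrm{id}_{M_H}$, so $F$ is onto. I expect the main obstacle to be the normality and exhaustion step (i)--(ii): verifying that the rescaled maps neither collapse nor blow up requires genuine metric estimates near the boundary orbit point, in particular control of the Jacobians $\det F_n'$ in the limit, and it is there rather than in the algebra of the dilations that the order-of-tangency hypothesis $s\le 2m$ must be fully exploited.
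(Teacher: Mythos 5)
Your argument is correct, but it takes a genuinely different route from the paper's, and the difference is worth spelling out. The paper does not dilate about the origin with the automorphisms $S_\lambda\circ T^2_t$ of $M_H$; it recenters at the orbit points themselves, using the polynomial maps $\phi_{\eta_n}$ that send $M_H$ onto the \emph{varying} models $M_{H_{\eta_n}}$, where $H_{\eta_n}$ is the Taylor expansion of $H$ at $\eta_{n1}$ rescaled by a factor $\tau(\eta_n)$ normalized so that $\|H_{\eta_n}\|=1$. It then extracts a limit polynomial $H_\infty$, proves $\Omega\simeq M_{H_\infty}$ (Proposition \ref{prop1}, via Lemma \ref{L:4} and Proposition \ref{P1}), and finally identifies $M_{H_\infty}$ with $M_H$ by a case analysis on the order of tangency: for order exactly $2m$ one computes $H_\infty(w_1)=H(\alpha+\beta w_1)-H(\alpha)-2\mathrm{Re}\sum_{j}\frac{H_j(\alpha)}{j!}(\beta w_1)^j$ and writes down an explicit affine equivalence $M_{H_\infty}\to M_H$ (Lemma \ref{lem7}), while for order $<2m$ all coefficients $a_{j,\bar q}$ with $j+q<2m$ tend to $0$, so $H_\infty=H$ outright (Lemma \ref{lem8}). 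Your observation that homogeneity of $H$ makes $\delta_t$ and $\tau_c$ genuine automorphisms of $M_H$ keeps the target domain fixed, so the whole identification step (the introduction of $H_\infty$, $\tau(a)$, and Lemmas \ref{lem7}--\ref{lem8}) collapses to the short check that $\sigma_n(\eta_n)$ stays in a compact subset of $M_H$ precisely when $s\le 2m$; that is exactly where the tangency hypothesis enters in both proofs, and your placement of it is right. What you give up relative to the paper is the information about more tangential orbits (the recentered scaling is what powers the final proposition and example of Section 3), and what the two proofs share --- correctly flagged by you as the only nontrivial analytic input --- are the external facts from Berteloot: the attraction property yielding the exhaustion $f_n^{-1}(U\cap\Omega)\nearrow\Omega$ (the paper's Remark \ref{remark}) and tautness/normal families together with the Greene--Krantz dichotomy (Proposition \ref{P1}) to upgrade the limit to a biholomorphism. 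With those two citations made explicit, your proof is complete and somewhat shorter than the paper's.
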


\begin{remark} Because of Condition $(M_H)$ at $p$, $\Omega$ is of finite type at $p$. Therefore, it is proved in \cite{Ber2} that $\Omega$ is biholomorphically equivalent to some model $M_{\tilde H}$, where $\tilde H$ is a subharmonic homogeneous polynomial. But we do not know the relationship between $H$ and $\tilde H$. Theorem \ref{T1} tells us that $\tilde H$ is exactly equal to $H$. 
\end{remark}
For a domain $\Omega$ in $\mathbb C^n$, the automorphism group is not easy to describe explicitly; besides, it is unknown in most cases. For instance, the automorphism groups of various domains are given in \cite{Chen, GKK, JP, Kim, Krantz, Shimizu, Sunada}.  Recently, explicit forms of automorphism groups of certain domains have been obtained in \cite{ ABP, BC1, BC2}. 

The second part of this note is to describe automorphism groups of models in $\mathbb C^2$. If a model is symmetric, i.e. $H(z_1)=|z_1|^{2m}$, then it is biholomorphically equivalent to the Thullen domain $E_{1,m}=\{(z_1,z_2)\in \mathbb C^2: |z_2|^2+|z_1|^{2m}<1\}$; the $\mathrm{Aut}(E_{1,m})$ is exactly the set of all biholomorphisms
$$
(z_1,z_2)\mapsto \Big(e^{i\theta_1}\frac{z_2-a}{1-\bar a z_2}, e^{i\theta_2}\frac {(1-|a|^2)^{1/2m}}{(1-\bar a z_2)^{1/m}}z_1\Big)
$$
for some $a\in \mathbb C$ with $|a|<1$ and $\theta_1,\theta_2\in \mathbb R$ (cf. \cite[Example 9, p.20]{GKK}). Let us denote by $\Omega_m=\{(z_1,z_2)\in\mathbb C^2:\mathrm{Re}~z_2+(\mathrm{Re}~z_1)^{2m}<0\}$. All the other models, which are not biholomorphically equivalent to $E_{1,m}$ or $\Omega_m$, will be treated together, as the generic case. Let us denote $T^1_t, T^2_t, R_\theta,S_\lambda$ by the following automorphisms:
\begin{equation*}
\begin{split}
T^1_t&: ~(z_1,z_2)\mapsto (z_1+it,z_2);\\
T^2_t&: ~(z_1,z_2)\mapsto (z_1,z_2+it);\\
R_{\theta}&: ~ (z_1,z_2)\mapsto (e^{i\theta}z_1,z_2);\\
S_\lambda&:~ (z_1,z_2)\mapsto (\lambda z_1,\lambda^{2m} z_2),
\end{split}
\end{equation*}
where $t\in\mathbb R$, $\lambda>0$, and $\exp (i\theta)$ is an $L$-root of unity ( see Section 4).   

With these notations, we obtain the following our second main result.
\begin{theorem}\label{T2} If $m\geq 2$, then 
\begin{itemize}
\item[(i)] $\mathrm{Aut}(\Omega_m)$ is generated by 
$$
\{T^1_t,T^2_t, R_{\pi},S_\lambda ~|~ t\in \mathbb R,\lambda>0 \};
$$
\item[(ii)] For any generic model $M_H$, $\mathrm{Aut}(M_H)$ is generated by 
$$
\{T^2_t, R_\theta,S_\lambda~|~ t\in \mathbb R,\lambda>0,\; \text{and}\; \exp (i\theta)\; \text{is an}\; L\text{-root of unity}  \}.
$$ 
\end{itemize}
\end{theorem}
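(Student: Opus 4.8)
The plan is to prove each part in two stages: the inclusion of the listed generators into $\mathrm{Aut}(M_H)$, and the reverse inclusion that they exhaust it. The first stage is a direct substitution into $\mathrm{Re}\,z_2+H(z_1)<0$. For $T^2_t$ use $\mathrm{Re}(it)=0$; for $S_\lambda$ use the homogeneity $H(\lambda z_1)=\lambda^{2m}H(z_1)$; for $R_\theta$ the requirement is $H(e^{i\theta}z_1)=H(z_1)$, and writing $H=\sum_{j+k=2m}c_{jk}z_1^{\,j}\bar z_1^{\,k}$ this holds exactly when $e^{i(j-k)\theta}=1$ for every $(j,k)$ with $c_{jk}\neq 0$. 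The set of such $e^{i\theta}$ is a finite cyclic group, and calling its order $L$ defines the phrase ``$L$-root of unity.'' For $\Omega_m$ one checks in addition that $T^1_t$ and $R_\pi$ preserve $(\mathrm{Re}\,z_1)^{2m}$, since $\mathrm{Re}(z_1+it)=\mathrm{Re}\,z_1$ and $(\mathrm{Re}(-z_1))^{2m}=(\mathrm{Re}\,z_1)^{2m}$.

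For the reverse inclusion I would first identify the identity component $\mathrm{Aut}^0(M_H)$ through its Lie algebra $\mathfrak g$ of complete holomorphic vector fields tangent to $\partial M_H$. Writing a holomorphic field as $X=a\,\partial_{z_1}+b\,\partial_{z_2}$, tangency to $\{\mathrm{Re}\,z_2+H(z_1)=0\}$ is the single identity $\mathrm{Re}\bigl(a\,H_{z_1}+\tfrac12 b\bigr)=0$ on the boundary; substituting the boundary relation $\bar z_2=-z_2-2H(z_1)$ turns it into an identity in $z_1,\bar z_1,z_2$, which I would solve by comparing coefficients. Euler's relation $z_1H_{z_1}+\bar z_1H_{\bar z_1}=2mH$ accounts for the dilation field $z_1\partial_{z_1}+2m z_2\partial_{z_2}$ and forces its parameter to be real; the constant fields produce $i\partial_{z_2}$ (from $b\equiv\gamma\in i\mathbb R$) and, for a constant $a\equiv\beta$, a system of relations of the form $\beta(2m-k)c_{2m-k,k}+\bar\beta(k+1)c_{2m-k-1,k+1}=0$ coming from the genuinely mixed monomials. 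Here the hypotheses do the work: the absence of harmonic terms ($c_{2m,0}=c_{0,2m}=0$) removes the purely (anti)holomorphic obstructions, and for a \emph{generic} $H$ these relations are incompatible across $k$ and force $\beta=0$, whereas for $H=(\mathrm{Re}\,z_1)^{2m}$ they collapse to $\mathrm{Re}\,\beta=0$ and admit $\beta\in i\mathbb R$, i.e.\ the field $i\partial_{z_1}$. Since $m\ge2$ excludes the extra shear fields present for the ball, I expect $\mathfrak g=\mathbb R\,(i\partial_{z_2})\oplus\mathbb R\,(z_1\partial_{z_1}+2mz_2\partial_{z_2})$ in the generic case (the solvable $ax+b$ algebra), with the extra summand $\mathbb R\,(i\partial_{z_1})$ for $\Omega_m$; integrating these complete fields gives $\mathrm{Aut}^0(M_H)=\langle T^2_t,S_\lambda\rangle$, respectively $\langle T^1_t,T^2_t,S_\lambda\rangle$.

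It then remains to compute the component group. Each $f\in\mathrm{Aut}(M_H)$ acts on $\mathfrak g$ by $\mathrm{Ad}$, and since the derived subalgebra $[\mathfrak g,\mathfrak g]=\mathbb R\,(i\partial_{z_2})$ is characteristic, $f$ must send the vertical field to a nonzero multiple of itself and must preserve the dilation field modulo $[\mathfrak g,\mathfrak g]$. As $S_\lambda$ has $(0,0)$ as its unique attracting and the point at infinity as its unique repelling boundary fixed point, this forces $f$ to fix $(0,0)$ once one excludes the possibility that $f$ inverts the dilation and swaps $0$ with $\infty$. That exclusion is the crux of the argument, and I would obtain it by contrasting the two ends: $(0,0)$ is a boundary point of finite type $2m$, and a dilation-inverting automorphism would have to match that finite-type data against the behavior of $\partial M_H$ at infinity, which fails for $m\ge2$ and $H$ generic (for the symmetric Thullen model this swap does exist and is exactly what enlarges the group). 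Having fixed $(0,0)$, I would compose $f$ with suitable $T^2_t$ and $S_\lambda$ to normalize its $1$-jet there; Cartan's uniqueness theorem for the hyperbolic domain $M_H$ then shows $f$ is determined by that jet, and the only residual freedom is a rotation $z_1\mapsto e^{i\theta}z_1$, whose invariance of $H$ forces $e^{i\theta}$ to be an $L$-root of unity (respectively $e^{i\theta}=\pm1$, i.e.\ $R_\pi$, for $\Omega_m$). This places $f$ in the group generated by the stated maps, finishing both parts. The main obstacle, as indicated, is the precise Lie-algebra computation together with the exclusion of the dilation-inverting automorphism, both resting on $m\ge2$ and the absence of harmonic terms in $H$.
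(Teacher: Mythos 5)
Your verification that $T^2_t,R_\theta,S_\lambda$ (and $T^1_t$ for $\Omega_m$) preserve the model agrees with the paper, but your converse follows a Lie-theoretic route (compute $\mathrm{Aut}^0$ from infinitesimal automorphisms, then control the component group via the adjoint action and Cartan's theorem) that is genuinely different from the paper's, which instead extends an arbitrary $f\in\mathrm{Aut}(M_H)$ holomorphically across a point of $\Gamma$ (Lemma \ref{L7}, resting on the Bedford--Pinchuk peak function, Berteloot's attraction of analytic discs, and Bell--Catlin), shows via the type stratification of Lemma \ref{L14} that $f$ preserves $\Gamma$, proves $J_f(0,0)\neq0$ by a scaling argument, and then invokes Kolar's normal-form rigidity to conclude $f=S_\lambda\circ R_\theta$ up to translations. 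The difficulty is that your route has a genuine gap at its foundation: identifying the Lie algebra of $\mathrm{Aut}(M_H)$ with the holomorphic vector fields \emph{tangent to} $\partial M_H$ presupposes that every complete holomorphic vector field on the unbounded domain $M_H$ extends smoothly to the boundary. That boundary regularity is precisely the hard analytic content that Lemma \ref{L7} supplies (for a single automorphism), and without it your coefficient computation only shows that the listed fields \emph{lie in} $\mathfrak{aut}(M_H)$ --- the inclusion you already had. Even granting tangency, you compare coefficients only for constant and Euler-type fields; a complete argument must also exclude fields such as $z_2\partial_{z_1}$, $z_1^j\partial_{z_2}$, $z_1z_2\partial_{z_2}$, and the higher-order fields that do appear for the ball and for $E_{1,m}$.

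The component-group step has further problems. The exclusion of a dilation-inverting automorphism is labelled the crux but is only asserted (``match the finite-type data against the behavior at infinity''), and as phrased it again requires boundary behavior of $f$ that you have not established. (In fact, once $\mathfrak g=\mathbb R V\oplus\mathbb R D$ with $[D,V]=-2mV$ is known, the inversion is excluded for free: every automorphism of this two-dimensional algebra fixes $D$ modulo the derived algebra, so $\mathrm{Ad}(f)$ cannot reverse the dilation. What is then genuinely open is the kernel of $\mathrm{Ad}$, i.e.\ the centralizer of $\mathrm{Aut}^0(M_H)$, which is exactly where the rotations live and where the work remains.) Finally, you invoke Cartan's uniqueness theorem to normalize the $1$-jet of $f$ ``at $(0,0)$'', a boundary point; Cartan's theorem applies at interior points, and making sense of a $1$-jet at the boundary again needs the extension result. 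Even at an interior fixed point, concluding that the residual freedom is a rotation $R_\theta$ with $e^{i\theta}$ an $L$-root of unity requires an additional argument (for instance, compactness of the isotropy group together with the observation that a non-root-of-unity $e^{i\theta}$ would generate a positive-dimensional closure of rotations preserving $H$, contradicting $\dim\mathfrak g=2$). So while the strategy is attractive and could in principle be completed, the two load-bearing steps --- boundary regularity of infinitesimal automorphisms and the rigidity pinning down the isotropy --- are exactly the points the paper resolves with Lemma \ref{L7} and with Kolar's normal form, and they are missing here.
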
    

Let $S(\Omega)$ denote the set of all boundary accumulation points for $\mathrm{Aut}(\Omega)$. Then it follows from Theorem \ref{T2} that
\begin{itemize}
\item[(i)] $S( E_{1,m})=\{(e^{i\theta},0)\in \mathbb C^2\colon \theta\in [0,2\pi)\}$;
\item[(ii)] $S(\Omega_{m})=\{(it,is)\in \mathbb C^2\colon t,s\in \mathbb R\}\cup \{\infty\}$;
\item[(iii)] $S( M_H)=\{(it,0)\in \mathbb C^2\colon t\in \mathbb R\}\cup \{\infty\}$ for any generic model $M_H$.
\end{itemize}

We remark that, for any model $M_H$ in $\mathbb C^2$, $S(M_H)$ is a smooth submanifold of $\partial M_H$. Moreover, the D'Angelo type is constant and maximal along $S(M_H)$. In addition, the behaviour  of orbits in any model $M_H\subset \mathbb C^2$ is well-known. For instance, if there exist a point $q\in M_H$ and a sequence $\{f_n\}\subset \mathrm{Aut}(M_H)$ such that $\{f_n(q)\}$ converges to some boundary accumulation point $p\in S(M_H)\setminus \{\infty\}$, then it must converge tangentially to order $\leq \deg(H)$ to $p$. In the past twenty years, much attention has been given to the behaviour of orbits near an orbit accumulation point. We refer the reader to the articles \cite{F-W, IK, I-K}, and references therein for the development of related subjects.   

A typical consequence of Theorem \ref{T2} and the Berteloot's result~\cite{Ber2} is as follows.
\begin{corollary}\label{corollary2013}
Let $\Omega$ be a domain in $\mathbb C^2$. Suppose that there exist a point $q\in \Omega$ and a sequence $\{f_j\}\subset\mathrm{Aut}(\Omega)$ such that $\{f_j(q)\}$ converges to $p_\infty\in \partial \Omega$. Assume that the boundary of $\Omega$ is smooth, pseudoconvex, and of D'Angelo finite type near $p_\infty$ $(\tau(\partial \Omega,p_\infty)=2m)$. Then exactly one of the following alternatives holds:
\begin{itemize}
\item[(i)] If $\dim \mathrm{Aut}(\Omega)=2$ then 
$$
\Omega\simeq M_H=\{(z_1,z_2)\in \mathbb C^2:\mathrm{Re}~z_2+H(z_1)<0\},
$$
where $M_H$ is a generic model in $\mathbb C^2$ and $\deg(H)=2m$.
\item[(ii)]  If $\dim \mathrm{Aut}(\Omega)=3$ then 
$$
\Omega\simeq \Omega_m=\{(z_1,z_2)\in \mathbb C^2:\mathrm{Re}~z_2+(\mathrm{Re}~z_1)^{2m}<0\}.
$$
\item[(iii)]  If $\dim \mathrm{Aut}(\Omega)=4$ then 
$$
\Omega\simeq E_{1,m}=\{(z_1,z_2)\in \mathbb C^2:\mathrm{Re}~z_2+|z_1|^{2m}<0\}\simeq \{(z_1,z_2)\in \mathbb C^2:|z_2|^2+|z_1|^{2m}<1\}.
$$
\item[(iv)] If $\dim \mathrm{Aut}(\Omega)=8$ then 
$$
\Omega\simeq \mathbb B^2= \{(z_1,z_2)\in \mathbb C^2:|z_1|^2+|z_2|^2<1\}.
$$
\end{itemize} 
The dimensions $0,1,5,6,7$ cannot occur with $\Omega$ as above.
\end{corollary}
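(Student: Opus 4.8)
The plan is to read off $\dim\mathrm{Aut}(\Omega)$ from a model to which $\Omega$ is equivalent. First I would apply Berteloot's theorem \cite{Ber2}: since $p_\infty$ is a boundary orbit accumulation point and $\partial\Omega$ is smooth, pseudoconvex and of D'Angelo finite type $\tau(\partial\Omega,p_\infty)=2m$ near $p_\infty$, the domain $\Omega$ is biholomorphically equivalent to a model $M_H$, where $H$ is a homogeneous subharmonic polynomial of degree $2m$ with no harmonic terms. Any biholomorphism $\Phi\colon\Omega\to M_H$ conjugates $\mathrm{Aut}(\Omega)$ onto $\mathrm{Aut}(M_H)$ by $f\mapsto \Phi\circ f\circ\Phi^{-1}$, and this is an isomorphism of (real) Lie groups for the compact-open topology; in particular $\dim\mathrm{Aut}(\Omega)=\dim\mathrm{Aut}(M_H)$. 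The whole statement therefore reduces to computing $\dim\mathrm{Aut}(M_H)$ for each possible model.

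Next I would split into cases according to $m$ and the type of the model, extracting the dimension from the list of generators. If $m=1$, then $H$ is homogeneous of degree $2$, subharmonic, and free of harmonic terms, which forces $H(z_1)=c\,|z_1|^2$ with $c>0$; hence $M_H$ is the Siegel domain, strongly pseudoconvex, and biholomorphic to $\mathbb B^2$ (Wong--Rosay, or directly the Cayley transform). Since $\mathrm{Aut}(\mathbb B^2)\cong PU(2,1)$ has real dimension $8$, this is alternative (iv). If $m\geq 2$, then by definition $M_H$ is exactly one of: generic, biholomorphic to $\Omega_m$, or biholomorphic to $E_{1,m}$. For a generic $M_H$, Theorem \ref{T2}(ii) presents $\mathrm{Aut}(M_H)$ as generated by the two one-parameter groups $T^2_t$ and $S_\lambda$ together with the finite group of rotations $R_\theta$ through $L$-th roots of unity; as the rotations contribute no continuous parameter and $\{T^2_t,S_\lambda\}$ generate a two-dimensional ($ax+b$-type) group (using $S_\lambda T^2_t S_\lambda^{-1}=T^2_{\lambda^{2m}t}$), we get $\dim\mathrm{Aut}(M_H)=2$, i.e.\ (i). For $\Omega_m$, Theorem \ref{T2}(i) adjoins the further one-parameter group $T^1_t$, so the identity component is generated by the three one-parameter groups $T^1_t,T^2_t,S_\lambda$ (with $R_\pi$ finite), whence $\dim=3$, i.e.\ (ii). For $E_{1,m}$, the explicit formula quoted in the introduction (from \cite[Example 9]{GKK}) exhibits $\mathrm{Aut}(E_{1,m})$ as parametrized by $a\in\mathbb C$ with $|a|<1$ and by $\theta_1,\theta_2\in\mathbb R$, i.e.\ by four real parameters, so $\dim=4$, i.e.\ (iii).

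Finally, to see that the four alternatives are mutually exclusive and exhaustive and that the values $0,1,5,6,7$ cannot occur, I would note that every admissible $\Omega$ is biholomorphic to exactly one of the four models $M_H$ (generic), $\Omega_m$, $E_{1,m}$, $\mathbb B^2$, whose automorphism groups have the pairwise distinct dimensions $2,3,4,8$ just computed. The $m=1$ (ball) case is separated from the $m\geq 2$ cases by the biholomorphic invariance of the D'Angelo type, while within $m\geq 2$ the distinctness of the dimensions $2,3,4$ shows that the generic, $\Omega_m$, and $E_{1,m}$ classes cannot overlap; and since no model other than these arises, $\dim\mathrm{Aut}(\Omega)$ can take no value outside $\{2,3,4,8\}$.

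The two substantive inputs, Berteloot's reduction to a model and the generation statement of Theorem \ref{T2}, carry all the weight, so I expect the only real work to be verification rather than the discovery of a new idea. The point needing care is the dimension bookkeeping: one must be sure that among the named generators there is no hidden continuous one-parameter subgroup beyond those listed---precisely what the word ``generated'' in Theorem \ref{T2} guarantees---and one must treat the degenerate type-$2$ case $m=1$ separately through Wong--Rosay, since Theorem \ref{T2} is stated only for $m\geq 2$.
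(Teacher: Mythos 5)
Your proposal is correct and follows exactly the route the paper intends: the corollary is stated there as an immediate consequence of Berteloot's model theorem and Theorem~\ref{T2}, with no further proof supplied. Your case analysis (Wong--Rosay for $m=1$, and the dimension count $2,3,4$ for the generic models, $\Omega_m$, and $E_{1,m}$ when $m\geq 2$, with exclusivity coming from the distinctness of these dimensions) is precisely the verification the authors leave to the reader.
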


For the case that $\partial \Omega$ is real analytic and of D'Angelo finite type near a boundary orbit accumulation point (without the hypothesis of pseudoconvexity), a similar result as the above corollary was obtained in \cite{Verma} by using a different method. In addition, it was shown in \cite{B-P3} that a smoothly bounded $\Omega$ in $\mathbb C^2$ with real analytic boundary and  with noncompact automorphism group, must be biholomorphically equivalent to $E_{1,m}$.   

This paper is organized as follows. In Section 2, we review some basic notions needed later. In Section 3, we prove Theorem \ref{T1}. Finally, the proof of Theorem \ref{T2} is given in Section 4.

\section{Definitions and results}

First of all, we recall the following definitions.

\begin{define}[see \cite{D}] Let $\Omega\subset \mathbb C^n$ be a domain with $\mathcal{C}^\infty$-smooth boundary and $p\in \partial \Omega$. Then the D'Angelo type $\tau(\partial \Omega,p)$ of $\partial \Omega$ at $p$ is defined as
\begin{equation*}
 \tau(\partial \Omega,p):=\sup_{\gamma} \frac{\nu(\rho\circ \gamma)}{\nu(\gamma)},
\end{equation*}
where $\rho$ is a definining function of $\Omega$ near $p$, the supremum is taken over all germs of nonconstant holomorphic curves 
$\gamma: (\mathbb C,0)\to (\mathbb C^n,p)$. We say that $p$ is a point of \emph{finite type} if $\tau(\partial \Omega,p)<\infty$ and of \emph{infinite type} if otherwise.
\end{define}
\begin{define} Let $X$, $Y$ be complex spaces and $\mathcal F $$ \subset Hol(X,Y)$.
\begin{itemize}
\item[(i)] \  A sequence $\big\{f_j\big\} \subset \mathcal F$ is \emph{compactly divergent}
if for every compact set $K \subset X$ and for every compact set 
$L \subset Y$ there is a number $j_0 = j_0(K,L)$ such that 
$f_j(K) \cap  L = \emptyset$ for all $j \geq j_0$.
\item[(ii)] \  The family $\mathcal F$ is said to be not compactly divergent if
$\mathcal F$ contains  no compactly divergent subsequences.
\end{itemize}
\end{define}
\begin{define}\  A complex space $X$ is called \emph{taut} if for any family $\mathcal F \subset Hol(\Delta,X)$, there exists a subsequence $\{f_j\}\subset \mathcal F $ which is either convergent or compactly divergent, where $\Delta=\{z\in \mathbb C\colon |z|<1\}$.
\end{define}

We recall  the concept of Carath\'{e}odory kernel convergence of domains which is relevant 
to the discussion of scaling methods (see \cite{Duren}). Note that the local 
Hausdorff convergence can replace the normal convergence in case the domains in 
consideration are convex. 

\begin{define} [Carath\'{e}odory Kernel Convergence] Let $\{\Omega_\nu\}$ be a sequence 
of domains in $\mathbb C^{n}$ such that $p\in\bigcap\limits_{\nu=1}^{\infty}\Omega_\nu$. 
If $p$ is an interior point of $\bigcap\limits_{\nu=1}^{\infty}\Omega_\nu$, the 
Carath\'{e}odory kernel $\hat \Omega$ at $p$ of the sequence $\{\Omega_\nu\}$, is defined 
to be the largest domain containing $p$ having the property that each compact subset of 
$\hat \Omega$ lies in all but a finite number of the domains $\Omega_\nu$. If   $p$ is not an 
interior point of $\bigcap\limits_{\nu=1}^{\infty}\Omega_\nu$, the Carath\'{e}odory kernel 
$\hat \Omega$ is $\{p\}$. The sequence $\{\Omega_\nu\}$  is said to converge to its kernel at 
$p$ if every subsequence of $\{\Omega_\nu\}$ has the same kernel at $p$.

We shall say that a sequence $\{\Omega_\nu\}$ of domains in $\mathbb C^n$ 
converges normally to $\hat \Omega$ (denoted by $\lim \Omega_\nu=\hat \Omega$) 
if there exists a point $p\in\bigcap\limits_{\nu=1}^{\infty}\Omega_\nu$ such that 
$\{\Omega_\nu\}$ converges to its Carath\'{e}odory kernel $\hat \Omega$ at $p$.      
\end{define}
Now we recall  several results which will be used later on. The following proposition is a generalization of the theorem of Greene-Krantz \cite{GK} (cf.~\cite{Thai}).
\begin{proposition} \label{P1}  Let $\{A_j\}_{j=1}^\infty$ and
 $\{\Omega_j\}_{j=1}^\infty$ be sequences of domains in a complex manifold $M$ 
with $\lim A_j=A_0$ and $\lim \Omega_j=\Omega_0$ for some $($uniquely determined $)$ 
domains $A_0$, $\Omega_0$ in $M$. Suppose that $\{f_j: A_j \to \Omega_j\} $ is a 
sequence of biholomorphic maps. Suppose also that the sequence $\{f_j: A_j\to M \}$ 
converges uniformly on compact subsets of $ A_0$ to a holomorphic map $F:A_0\to M $ 
and the sequence $\{g_j:=f^{-1}_j: \Omega_j\to M \}$ converges uniformly on compact 
subsets of $\Omega_0$ to a holomorphic map $G:\Omega_0\to M $.
Then one of the following two assertions holds. 
\begin{enumerate}
\item[(i)] The sequence $\{f_j\}$ is compactly divergent, i.e., for each compact 
set $K\subset A_0$ and each compact set $L\subset \Omega_0$, 
there exists an integer $j_0$ such that $f_j(K)\cap L=\emptyset$ for $j\geq j_0$, or
\item[(ii)] There exists a subsequence $\{f_{j_k}\}\subset \{f_j\}$  
such that the sequence $\{f_{j_k}\}$ converges uniformly on compact subsets 
of $A_0$ to a biholomorphic map $F: A_0 \to \Omega_0$.
\end{enumerate}
\end{proposition}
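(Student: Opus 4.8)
The plan is to prove the dichotomy by showing that whenever $\{f_j\}$ fails to be compactly divergent, the limit $F$ must be a biholomorphism of $A_0$ onto $\Omega_0$ with inverse $G$. First I would dispose of the trivial alternative: if $\{f_j\}$ is compactly divergent we are in case (i), so assume it is not. By the definition of compact divergence there then exist compact sets $K\subset A_0$ and $L\subset \Omega_0$ and a subsequence (which I relabel as $\{f_j\}$, noting it still converges to $F$) together with points $a_j\in K$ such that $b_j:=f_j(a_j)\in L$. Passing to a further subsequence I may assume $a_j\to a_\infty\in A_0$ and $b_j\to b_\infty\in \Omega_0$. Since $f_j\to F$ uniformly near $a_\infty$ I get $F(a_\infty)=b_\infty\in \Omega_0$, and since $g_j(b_j)=a_j$ with $g_j\to G$ uniformly near the interior point $b_\infty$ I get $G(b_\infty)=a_\infty\in A_0$. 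Thus the open sets $U:=\{z\in A_0: F(z)\in\Omega_0\}$ and $V:=\{w\in\Omega_0: G(w)\in A_0\}$ are both nonempty.

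The second step is to identify $G$ and $F$ as mutual inverses on $U$ and $V$. For $z\in U$ the point $F(z)$ lies in the interior $\Omega_0$, so choosing a compact neighborhood of $F(z)$ and using that $f_j(z)\to F(z)$ together with the locally uniform convergence $g_j\to G$, I obtain $z=g_j(f_j(z))\to G(F(z))$, whence $G\circ F=\mathrm{id}$ on $U$; symmetrically $F\circ G=\mathrm{id}$ on $V$. In particular the Jacobian determinant (in local coordinates) of $F$ is nonzero on the nonempty open set $U$, hence not identically zero on the connected set $A_0$. Because each $f_j$ is injective and $f_j\to F$ locally uniformly, the several-variable Hurwitz phenomenon forces $F$ to be either injective or to have identically degenerate Jacobian; the latter is excluded, so $F$ is injective. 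An injective holomorphic map between equidimensional manifolds is then a biholomorphism onto its open image, and the same applies to $G$. A short bookkeeping with the relations $G\circ F=\mathrm{id}_U$ and $F\circ G=\mathrm{id}_V$ shows $F(U)=V$ and $G(V)=U$, so $F$ restricts to a biholomorphism $U\to V$.

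The heart of the argument, and the step I expect to be the main obstacle, is to upgrade this to $U=A_0$ (equivalently $F(A_0)\subseteq \Omega_0$). I would first rule out that $F$ takes values outside $\overline{\Omega_0}$. Suppose $F(z_0)\notin \overline{\Omega_0}$; since $F$ is open, $F$ carries a neighborhood $N$ of $z_0$ onto an open set $W$ with $\overline W\cap\overline{\Omega_0}=\emptyset$. For any compact $Q\subset W$ a Rouch\'e/degree argument, using that $f_j\to F$ uniformly on a compact subset of $N$ and that $F$ covers $Q$, shows $f_j(N)\supseteq Q$ for large $j$; as $f_j(N)\subseteq \Omega_j$, the compact set $Q$ lies in $\Omega_j$ for all large $j$ and hence in the Carath\'eodory kernel $\Omega_0$, contradicting $Q\subset W$ with $W\cap\Omega_0=\emptyset$. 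Therefore $F(A_0)\subseteq \overline{\Omega_0}$. Finally, since $F(A_0)$ is open and contained in $\overline{\Omega_0}$, it cannot contain any boundary point of $\Omega_0$ (such a point would force a full neighborhood, hence exterior points, into $F(A_0)$); thus $F(A_0)\subseteq \Omega_0$, i.e. $U=A_0$, and by symmetry $V=\Omega_0$. Consequently $F\colon A_0\to\Omega_0$ and $G\colon\Omega_0\to A_0$ are mutually inverse biholomorphisms, which is exactly alternative (ii). The delicate point throughout is the interplay between the openness of the limit map $F$ and the one-sided nature of Carath\'eodory kernel convergence: compact subsets of $\Omega_0$ are eventually inside $\Omega_j$, but a priori nothing prevents the $\Omega_j$ from bulging outward, which is why the degree argument controlling $f_j(N)$ is needed in place of a naive passage to the limit.
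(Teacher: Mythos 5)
The paper does not actually prove Proposition \ref{P1}; it quotes it as a known generalization of the Greene--Krantz theorem, citing \cite{GK} and \cite{Thai}. So the comparison can only be against the standard argument in those sources. Your first two steps (locating $a_\infty$, $b_\infty$ from non-divergence, establishing $G\circ F=\mathrm{id}$ on $U$ and $F\circ G=\mathrm{id}$ on $V$, and invoking the several-variable Hurwitz phenomenon to get injectivity of $F$ and $G$) are correct and match the standard route. The problem is exactly in the step you yourself flag as the heart of the matter, and it is a genuine gap. You argue that if a compact set $Q$ lies in $\Omega_j$ for all large $j$, then $Q$ lies in the Carath\'eodory kernel $\Omega_0$. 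That inference is false: the kernel at $p$ is by definition the largest \emph{domain containing $p$} whose compact subsets are eventually in all $\Omega_j$, so it is connected through $p$. Taking $\Omega_j=\Omega_0\cup B$ for a fixed ball $B$ with $\overline B\cap\overline{\Omega_0}=\emptyset$ gives a sequence converging to the kernel $\Omega_0$ while compact subsets of $B$ lie in every $\Omega_j$ but not in $\Omega_0$. Hence your intended contradiction in the case $F(z_0)\notin\overline{\Omega_0}$ does not materialize. Your closing step has a second flaw of the same flavor: an open set contained in $\overline{\Omega_0}$ can perfectly well contain boundary points of $\Omega_0$ (the interior of $\overline{\Omega_0}$ may be strictly larger than $\Omega_0$; think of a punctured disc), so $F(A_0)\subset\overline{\Omega_0}$ plus openness does not yield $F(A_0)\subset\Omega_0$.

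The good news is that the ingredient you already have --- the multivariate Rouch\'e/degree argument showing that every compact subset of $F(A_0)$ is eventually contained in $\Omega_j$ --- is the right one; it just has to be combined with the \emph{maximality and connectedness} built into the kernel definition rather than applied to an isolated compact set. Concretely: $F$ is injective with nonvanishing Jacobian, so $F(A_0)$ is a domain; it meets $\Omega_0$ because $F(a_\infty)=b_\infty\in\Omega_0$; and by your degree argument every compact subset of $F(A_0)$ lies in all but finitely many $\Omega_j$ (use that $F$ is a homeomorphism onto its image to pull a compact $Q\subset F(A_0)$ back to a compact subset of $A_0$ and cover it by finitely many coordinate balls). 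Then $F(A_0)\cup\Omega_0$ is a domain containing the base point all of whose compact subsets are eventually in every $\Omega_j$, so maximality of the kernel forces $F(A_0)\cup\Omega_0=\Omega_0$, i.e. $F(A_0)\subseteq\Omega_0$; the symmetric argument gives $G(\Omega_0)\subseteq A_0$, and your identities $G\circ F=\mathrm{id}$, $F\circ G=\mathrm{id}$ then hold globally, giving alternative (ii). With that replacement the proof is complete; as written, the deduction from the kernel definition is incorrect.
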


In closing this section we recall the following lemma (see \cite{Ber2}).
\begin{lemma}[F. Berteloot]\label{L:4}
Let $\sigma_\infty$ be a subharmonic function of class $\mathcal{C}^2$ on $\mathbb C$ such that $\sigma_\infty(0)=0 $ and $\int_{\mathbb C} \bar\partial \partial\sigma_\infty =+\infty$. Let $\{\sigma_k\}$ be a sequence of subharmonic functions on $\mathbb C$ which  converges uniformly on compact subsets of $\mathbb C$ to $\sigma_\infty$. Let $\Omega$ be any domain in a complex manifold of dimension $m\ (m\geq 1)$ and let  $z_0$ be a fixed point in $\Omega$. Denote by $M_k$ the domain in $ \mathbb C^2$ defined by 
$$M_k=\{(z_1,z_2)\in \mathbb C^2: \mathrm{Re}~z_2+\sigma_k(z_1)<0\}.$$ 
Then any sequence $h_k\in Hol(\Omega, M_k)$ such that $\{h_k(z_0), k\geq 1\}\Subset M_\infty $ admits a subsequence which converges uniformly on compact subsets of $\Omega$ to an element of $Hol(\Omega, M_\infty)$. 
\end{lemma}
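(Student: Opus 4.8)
The plan is to prove the lemma by a normal families argument in two stages: first show that the family $\{h_k\}$ is relatively compact in $\mathrm{Hol}(\Omega,\mathbb{C}^2)$ (i.e.\ locally uniformly bounded), so that Montel's theorem furnishes a subsequence converging locally uniformly to a holomorphic map $h=(h^1,h^2)\colon\Omega\to\overline{M_\infty}$; then use the maximum principle to upgrade $h(\Omega)\subset\overline{M_\infty}$ to $h(\Omega)\subset M_\infty$. The entire difficulty is concentrated in the boundedness stage, and it is precisely there that the hypothesis $\int_{\mathbb{C}}\bar\partial\partial\sigma_\infty=+\infty$ is indispensable.

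First I would record the basic object: since each $\sigma_k$ is subharmonic and $h_k^1$ is holomorphic, the function
\[
u_k:=\mathrm{Re}\,h_k^2+\sigma_k\circ h_k^1
\]
is plurisubharmonic on $\Omega$ and strictly negative, because $h_k(\Omega)\subset M_k$. Passing to a subsequence, the anchoring hypothesis $\{h_k(z_0)\}\Subset M_\infty$ together with $\sigma_k\to\sigma_\infty$ (locally uniformly) lets me assume $h_k(z_0)\to(a,b)\in M_\infty$, so that $u_k(z_0)\to\mathrm{Re}\,b+\sigma_\infty(a)=:-2\delta<0$. Next I reduce the boundedness of $\{h_k\}$ to that of the first coordinate alone: once $\{h_k^1\}$ is known to be locally bounded, the values $h_k^1(K)$ lie in a fixed compact set on which $\sigma_k\to\sigma_\infty$ uniformly, hence $\sigma_k\circ h_k^1$ is bounded and $\mathrm{Re}\,h_k^2<-\sigma_k\circ h_k^1$ is bounded above on $K$; applying Harnack's inequality to the positive harmonic function $C-\mathrm{Re}\,h_k^2$ and the anchored value $\mathrm{Re}\,h_k^2(z_0)$ then produces a lower bound for $\mathrm{Re}\,h_k^2$, and the harmonic-conjugate estimate, normalized by $h_k^2(z_0)$, bounds $\mathrm{Im}\,h_k^2$. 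Thus $\{h_k^2\}$ is locally bounded and Montel applies to both coordinates.

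The genuinely hard point, and the one I expect to be the main obstacle, is the local boundedness of $\{h_k^1\}$ itself, which simply fails without the infinite-mass hypothesis: if $\sigma_\infty$ were harmonic then $M_\infty$ would contain affine complex lines in the $z_1$-direction and the first coordinate could run off to infinity. The mechanism I would exploit is that $\int_{\mathbb{C}}\bar\partial\partial\sigma_\infty=+\infty$ forces $M_\infty$ to admit no nonconstant entire curve, i.e.\ to be Brody hyperbolic and in fact taut. Concretely, if $\{h_k^1\}$ were unbounded on some complex disc embedded in $\Omega$, a Brody-type reparametrization should extract in the limit a nonconstant entire map $g\colon\mathbb{C}\to\mathbb{C}$ for which $\sigma_\infty\circ g$ stays bounded above; but a subharmonic function on $\mathbb{C}$ that is bounded above is constant, so $g$ would avoid the entire Riesz mass of $\Delta\sigma_\infty$, which is impossible when that mass is infinite. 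This contradiction delivers the desired uniform bound on $\{h_k^1\}$; the verification that the reparametrization limit genuinely produces such a $g$ is the delicate part of the whole argument.

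Finally I would identify the limit. Along the convergent subsequence, $h^1$ is locally bounded, so $\sigma_k\circ h_k^1\to\sigma_\infty\circ h^1$ locally uniformly, whence $u_k\to u:=\mathrm{Re}\,h^2+\sigma_\infty\circ h^1$ locally uniformly; consequently $u$ is plurisubharmonic, $u\le 0$, and $u(z_0)=-2\delta<0$. If $u(z_*)=0$ for some $z_*\in\Omega$, then $u$ attains its maximum $0$ at an interior point, so by the maximum principle $u\equiv 0$ on the connected set $\Omega$, contradicting $u(z_0)<0$. Hence $u<0$ throughout, that is $h(\Omega)\subset M_\infty$, and $h\in\mathrm{Hol}(\Omega,M_\infty)$ is the required limit. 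Note that this last step is soft and uses no further hypothesis, which confirms that all the analytic weight of the lemma rests on the boundedness of the first coordinate and thus on $\int_{\mathbb{C}}\bar\partial\partial\sigma_\infty=+\infty$.
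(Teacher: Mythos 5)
The paper does not actually prove this lemma --- it is quoted from Berteloot's article \cite{Ber2} with a bare citation --- so there is no in-paper argument to measure you against; I can only judge your outline on its own terms. Its architecture (bound $h_k^1$ locally, then $h_k^2$, then Montel plus the maximum principle to land in $M_\infty$ rather than $\overline{M_\infty}$) is the right one, and your second and third stages are sound. The problem is a circularity in the stage you yourself identify as carrying all the weight. To make the Brody/Zalcman limit $g$ satisfy ``$\sigma_\infty\circ g$ is bounded above,'' you must bound $\sigma_k\circ h_k^1$ from above on the shrinking discs being rescaled; the only inequality available is $\sigma_k\circ h_k^1<-\mathrm{Re}\,h_k^2$, and a lower bound for $\mathrm{Re}\,h_k^2$ is precisely what you derive only \emph{after} $h_k^1$ is known to be locally bounded. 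Nor can you substitute a uniform lower bound for $\sigma_k$ on all of $\CC$: none is assumed, and even in the application of Section 3 the polynomials $H_{\eta_n}$ need not be bounded below. A second warning sign is that your proposed contradiction (``a subharmonic function on $\CC$ bounded above is constant, so $g$ avoids the Riesz mass'') would only use that $\bar\partial\partial\sigma_\infty\not\equiv 0$: by Picard $g$ omits at most one point and $\bar\partial\partial\sigma_\infty$ is continuous, so any nonzero mass would already be contradicted. An argument that proves the statement under a strictly weaker hypothesis than the stated one cannot be closing the real gap.

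The step can be repaired, and the repair is where the infinite mass genuinely enters. The function $v_k:=\mathrm{Re}\,h_k^2+\sigma_k\circ h_k^1$ is negative and subharmonic on every analytic disc in $\Omega$, with $v_k(z_0)$ bounded below by the anchoring hypothesis; the Riesz--Jensen formula
\begin{equation*}
\frac{1}{2\pi}\int_{0}^{2\pi}v_k(\rho e^{i\theta})\,d\theta-v_k(0)=\frac{1}{2\pi}\int_{|\zeta|<\rho}\log\frac{\rho}{|\zeta|}\;\bar\partial\partial v_k \;\leq\; -v_k(0)
\end{equation*}
then gives a bound, uniform in $k$ and propagated to compact subsets by chaining discs, on $\int_K\bar\partial\partial v_k=\int_K\bar\partial\partial(\sigma_k\circ h_k^1)$ (the harmonic term $\mathrm{Re}\,h_k^2$ drops out). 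If $\{h_k^1\}$ were not normal, the Zalcman rescaling would now produce a nonconstant entire $g$ with $\int_{\CC}\bar\partial\partial(\sigma_\infty\circ g)<+\infty$; but $g(\CC)$ omits at most one point, so this pullback mass dominates $\int_{\CC}\bar\partial\partial\sigma_\infty=+\infty$, a contradiction that uses the full strength of the hypothesis. With this substitution --- controlling the Riesz mass of the pullback rather than the pointwise size of $\sigma_\infty\circ g$ --- the rest of your outline goes through.
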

 
\section{Asymptotic behaviour of orbits in a model in $\mathbb C^2$}
Let $P$ be a subharmonic polynomial. Let us denote by $M_P$ the model given by
$$
M_P=\{(z_1,z_2)\in\mathbb C^2: \rho(z_1,z_2):=\mathrm{Re}~z_2+P(z_1)<0\}.
$$
Let $\Omega$ be a domain in $\mathbb C^2$. Suppose that $\partial \Omega$ is pseudoconvex, finite type, and smooth of class $\mathcal{C}^\infty$ near a boundary point $p\in \partial \Omega$. In \cite{Ber2}, F. Berteloot proved that if $p$ is a boundary orbit accumulation point for $\mathrm{Aut}(\Omega)$, then $\Omega$ is biholomorphically equivalent to a model $M_H$, where $H$ is a homogeneous subharmonic polynomial of degree $2m$ which contains no harmonic terms with $\|H\|=1$. Here and in what follows, denote by $\|P\|$ the maximum of absolute values of the coefficients of a polynomial $P$. Let us denote by $\mathcal{P}_{2m}$ the space of real valued polynomials on $\mathbb C$ with degree less than or equals to $2m$ and which do not contain any harmonic term and by 
$$
\mathcal{H}_{2m} =\{H\in \mathcal{P}_{2m}\;\text{such that}\; deg(H)=2m \;\text{and}\; H\; \text{is homogeneous and subharmonic}\}.
$$

From now on, let $H\in \mathcal{H}_{2m}$ be as in Theorem \ref{T1}. Taking the risk of confusion we employ the notation
$$
H_{j} := \frac{\partial^j H}{\partial z_1^j};\; H_{j,\bar q}:=\frac{\partial^{j+q} H}{\partial z_1^j\partial \bar z_1^q}
$$
throughout the paper for all $j,q\in \mathbb N^*$.

For each $a=(a_1,a_2)\in \mathbb C^2$, let us define 
$$
H_a(w_1)=\frac{1}{\epsilon(a)} \sum_{j,q>0} \frac{H_{j,\bar q}(a_1) }{(j+q)!} \tau(a)^{j+q} w_1^j\bar w_1^q,
$$
where $\epsilon(a)=|\mathrm{Re}~a_2+H(a_1)| $ and $\tau(a)$ is chosen so that $\|H_a\|=1$. We note that $\sqrt{\epsilon(a)}\lesssim\tau(a)\lesssim\epsilon(a)^{1/(2m)}$. Denote by $\phi_a$ the holomorphic map
\begin{equation*}
\begin{split}
\phi_a: \mathbb C^2&\to \mathbb C^2\\ 
z &\mapsto w=\phi_a(z), 
\end{split}
\end{equation*}
given by  
\begin{equation*}
\begin{cases}
{w}_2=\dfrac{1}{\epsilon(a)}\Big[z_2-a_{2}-\epsilon(a)+2\sum\limits_{j=1}^{2m}\frac{H_j(a_{1})}{j!}(z_1-a_{1})^j   \Big]\\
{w}_1 =\dfrac{1}{\tau(a)}[z_1-a_{1}].
\end{cases}
\end{equation*}
It is easy to check that $\phi_a$ maps biholomorphically $M_H$ onto $M_{H_a}$ and $\phi_a(a)=(0,-1)$. 

Now let us consider a domain $\Omega$ in $\mathbb C^2$ satisfying Condition $(M_H)$ at a boundary point $p\in \partial \Omega$. With no loss of generality, we can assume $p=(0,0)$ and
$$
\Omega\cap U=\{(z_1,z_2)\in U: \rho(z_1,z_2)=\mathrm{Re}~z_2+H(z_1)<0\}.
$$
Assume that there exist a sequence $\{f_n\}\subset \mathrm{Aut}(\Omega)$ and a point $q\in M_H$ such that $\eta_n:=f_n(q)\to (0,0)$ as $n\to\infty$.
\begin{remark}\label{remark}
By Proposition 2.1 in \cite{Ber2}, $\Omega$ is taut and after taking a subsequence we 
may assume that for each compact subset $K\subset \Delta$ there exists a positive integer $n_0$ such that $f_n(K)\subset \Omega\cap U $ 
for every $n\geq n_0$.
\end{remark}

 Since $\|H_{\eta_n}\|=1$, passing to a subsequence if necessary, we can assume that $\lim H_{\eta_n}= H_\infty$, where $H_\infty\in \mathcal{P}_{2m}$ and $\|H_\infty\|=1$. 
\begin{proposition}\label{prop1} $\Omega$ is biholomorphically equivalent to $M_{H_\infty}$.
\end{proposition}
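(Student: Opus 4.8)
The plan is to realize $\Omega$ as a normal (Carath\'eodory kernel) limit of a sequence of domains obtained by pushing $\Omega$ to its models via the normalizing maps $\phi_{\eta_n}$, and then apply Proposition \ref{P1} together with Berteloot's Lemma \ref{L:4} to produce a biholomorphism onto $M_{H_\infty}$. First I would set $g_n := \phi_{\eta_n}\circ f_n$, a biholomorphism from (a neighborhood inside) $\Omega$ onto $M_{H_{\eta_n}}$ with the crucial normalization $g_n(q)=\phi_{\eta_n}(\eta_n)=(0,-1)$, which is an interior point of every $M_{H_{\eta_n}}$ as well as of the limit $M_{H_\infty}$. By construction $\|H_{\eta_n}\|=1$ and, after passing to the subsequence already fixed in the excerpt, $H_{\eta_n}\to H_\infty$ uniformly on compact subsets of $\mathbb C$; hence the domains $M_{H_{\eta_n}}$ converge normally to $M_{H_\infty}$ at the base point $(0,-1)$, so the target side of Proposition \ref{P1} is in place with $\Omega_j = M_{H_{\eta_n}}$ and $\Omega_0 = M_{H_\infty}$.

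Next I would establish the two convergence-and-compactness inputs that Proposition \ref{P1} requires. On the source side, the domains $A_j$ are just copies of $\Omega$ (or of the shrinking neighborhoods $\Omega\cap U$ through which $f_n$ factors by Remark \ref{remark}), whose kernel is $\Omega$ itself. To control the maps $g_n\colon \Omega \to M_{H_{\eta_n}}$ I would invoke Lemma \ref{L:4} directly: the sequence $\sigma_n := H_{\eta_n}$ converges uniformly on compacta to $\sigma_\infty := H_\infty$, and since $H_\infty\in\mathcal{H}_{2m}$ is a nonharmonic subharmonic polynomial of degree $2m$ one has $\int_{\mathbb C}\bar\partial\partial H_\infty = +\infty$; because $g_n(q)=(0,-1)$ stays in a fixed compact subset of $M_{H_\infty}$, Lemma \ref{L:4} yields a subsequence of $\{g_n\}$ converging uniformly on compact subsets of $\Omega$ to some $G\in \mathrm{Hol}(\Omega, M_{H_\infty})$. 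Symmetrically, applying Lemma \ref{L:4} to the inverses $g_n^{-1}\colon M_{H_{\eta_n}}\to \Omega$ (whose base-point images $g_n^{-1}(0,-1)=q$ stay in a fixed compact subset of $\Omega$, using tautness of $\Omega$ from Remark \ref{remark}) produces a limit $F\in\mathrm{Hol}(M_{H_\infty},\Omega)$.

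With both limits in hand, Proposition \ref{P1} leaves exactly two possibilities: either $\{g_n\}$ is compactly divergent, or a subsequence converges to a biholomorphism $\Omega\to M_{H_\infty}$. The main obstacle, and the heart of the argument, is to rule out compact divergence. Here the normalization does the work: the orbit point $g_n(q)=(0,-1)$ lies in the fixed compact set $\{(0,-1)\}\subset M_{H_\infty}$ for every $n$, so taking $K=\{q\}$ and $L=\{(0,-1)\}$ shows $g_n(K)\cap L \neq \emptyset$ for all $n$, which directly contradicts the definition of compact divergence. Therefore alternative (ii) of Proposition \ref{P1} holds, giving a biholomorphism from $\Omega$ onto $M_{H_\infty}$ and completing the proof. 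The one point demanding care is verifying that the $g_n$ are genuinely defined on exhausting subsets of $\Omega$ (rather than only on $\Omega\cap U$); this is exactly what Remark \ref{remark} secures, since tautness lets us pass to a subsequence for which the images $f_n(K)$ eventually enter $\Omega\cap U$ for each fixed compact $K\subset\Omega$, so that $\phi_{\eta_n}\circ f_n$ makes sense and the kernel of the source domains is all of $\Omega$.
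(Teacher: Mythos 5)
Your proposal is correct and follows essentially the same route as the paper: normalize via $\phi_{\eta_n}\circ f_n$ sending $q$ to $(0,-1)$, use Lemma \ref{L:4} (with Remark \ref{remark}) for the forward maps and tautness of $\Omega$ for the inverses, and conclude via Proposition \ref{P1}, with the fixed image point $(0,-1)$ ruling out compact divergence. One small caution: Lemma \ref{L:4} itself only applies to maps \emph{into} the models $M_k$, so for the inverse sequence the convergence must come from tautness of $\Omega$ (as the paper does and as you note parenthetically), not from a ``symmetric'' application of that lemma.
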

\begin{proof}
Let $\psi_n :=\phi_{\eta_n}\circ f_n$ for each $n\in \mathbb N^*$ and consider the following sequence of biholomorphisms
\begin{equation*}
\begin{split}
\psi_n: f_n^{-1}(\Omega\cap U)&\to M_{H_{\eta_n}}\\ 
q &\mapsto (0,-1).
\end{split}
\end{equation*}
By Lemma \ref{L:4} and by Remark \ref{remark}, after taking  a subsequence we may assume that $\{\psi_n\}$ converges uniformly on any compact subsets of $\Omega$ to a holomorphic map $g: \Omega \to M_{H_\infty}$. In the other hand, since $\Omega$ is taut we can assume that $\{\psi^{-1}_n\}$ converges also uniformly on any compact subset of $M_{H_\infty}$ to a holomorphic map $\tilde g: M_{H_\infty}\to M_H $. Therefore it follows from Proposition \ref{P1} that $g$ is biholomorphic, and hence $\Omega$ is biholomorphically equivalent to $M_{H_\infty}$.
\end{proof}
\begin{remark} $\mathrm{dist}(\eta_n, \partial M_H)\approx \epsilon_n:=|\rho(\eta_n)|$.
\end{remark}
\begin{remark}

\noindent 
i) Let $\{\eta_n\}$ be a sequence in $M_H$  which converges 
tangentially to order $2m$ to $(0,0)$. Set $\epsilon_n:=|\rho(\eta_n)|\approx |\eta_{n1}|^{2m}$. 
Then we have
\begin{equation*}
\begin{split}
|\mathrm{Re}~\eta_{n2}|&=|\epsilon_n+H(\eta_{n1})|\\
                 &\lesssim |\eta_{n1}|^{2m}.
\end{split}
\end{equation*}

\noindent
ii) Suppose that $\{\eta_n\}$ is a sequence in $M_H$  which converges 
tangentially to order $<2m$ to $(0,0)$. Then we have $|\eta_{n1}|^{2m}=o(\epsilon_n)$ and  we thus obtain the following estimate
\begin{equation*}
\begin{split}
|\mathrm{Re}~\eta_{n2}|&=|\epsilon_n+H(\eta_{n1})|\\
                 &\approx |\epsilon_n|.
\end{split}
\end{equation*}

\end{remark}
\begin{lemma}\label{lem7} If  $\{\eta_n\}\subset M_H$ converges tangentially to order $2m$ to $(0,0)$, then $deg(H_\infty)=2m$ and moreover $M_{H_\infty}$ is biholomorphically equivalent to $M_H$.
\end{lemma}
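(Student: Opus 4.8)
The plan is to track the two normalizing parameters $\tau(\eta_n)$ and $\epsilon_n$ against the size $r_n:=|\eta_{n1}|$ of the first coordinate, exploiting the homogeneity of $H$ to pass to the limit in closed form. Since the convergence is tangential of order $2m$ we have $\epsilon_n\approx r_n^{2m}$, and since $\eta_n\to(0,0)$ we have $r_n\to 0$ with $r_n>0$. Write $\eta_{n1}=r_ne^{i\theta_n}$ and set $s_n:=\tau(\eta_n)/r_n$. Because $H_{j,\bar q}$ is real-homogeneous of degree $2m-j-q$, we have $H_{j,\bar q}(\eta_{n1})=r_n^{2m-j-q}H_{j,\bar q}(e^{i\theta_n})$, so the definition of $H_{\eta_n}$ factors as
\begin{equation*}
H_{\eta_n}(w_1)=\frac{r_n^{2m}}{\epsilon_n}\,G_n(w_1),\qquad G_n(w_1):=\sum_{j,q>0}\frac{H_{j,\bar q}(e^{i\theta_n})}{(j+q)!}\,s_n^{\,j+q}\,w_1^j\bar w_1^q.
\end{equation*}
Everything then reduces to controlling the scalar $r_n^{2m}/\epsilon_n$ and the limit of $G_n$.

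The main quantitative step — and the place where the hypothesis ``order $2m$'' is essential — is to prove $s_n\approx 1$, i.e. $\tau(\eta_n)\approx r_n$. The upper bound $\tau(\eta_n)\lesssim\epsilon_n^{1/(2m)}\approx r_n$ is the estimate already recorded just after the definition of $H_a$; for the lower bound I would use the normalization $\|H_{\eta_n}\|=1$. Indeed, $|H_{j,\bar q}(e^{i\theta_n})|$ is bounded uniformly in $n$ (finitely many indices, $\theta_n$ on a compact circle) and $r_n^{2m}/\epsilon_n\approx 1$, so every coefficient of $H_{\eta_n}$ is $\lesssim s_n^{\,j+q}$; if some subsequence had $s_n\to 0$, then eventually $s_n<1$, whence each coefficient is $\lesssim s_n^2\to 0$ and $\|H_{\eta_n}\|\to 0$, contradicting $\|H_{\eta_n}\|=1$. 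Hence $s_n$ stays bounded away from $0$, and with the upper bound $s_n\approx 1$. Passing to a subsequence I may assume $\theta_n\to\theta_\infty$, $s_n\to s_\infty\in(0,\infty)$ and $r_n^{2m}/\epsilon_n\to 1/e_\infty\in(0,\infty)$; then $G_n\to G_\infty$ coefficientwise and $H_\infty=\tfrac{1}{e_\infty}G_\infty$.

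For $\deg(H_\infty)=2m$ I examine the top-degree part. In $G_\infty$ the monomials with $j+q=2m$ have $H_{j,\bar q}$ constant (a derivative of order $2m$ of a degree-$2m$ polynomial), equal to $j!\,q!\,c_{jq}$ with $c_{jq}$ the coefficient of $z_1^j\bar z_1^q$ in $H$, while the shift by $e^{i\theta_\infty}$ contributes only to lower order. Thus the degree-$2m$ part of $e_\infty H_\infty$ equals $\tfrac{s_\infty^{2m}}{(2m)!}\sum_{j+q=2m,\,j,q>0}j!\,q!\,c_{jq}\,w_1^j\bar w_1^q$. Since $H$ is a nonzero homogeneous polynomial of degree $2m$ with no harmonic terms, some $c_{jq}$ with $1\le j,q$ is nonzero; hence this leading part does not vanish and $\deg(H_\infty)=2m$. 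In fact, using $H_{j,\bar q}=j!\,q!\,c_{jq}$, the leading homogeneous part of $H_\infty$ is the positive multiple $\tfrac{s_\infty^{2m}}{e_\infty}H$ of $H$ itself.

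Finally, for $M_{H_\infty}\simeq M_H$ I would build the biholomorphism explicitly from the limiting data. Homogeneity gives that $e_\infty H_\infty(w_1)$ is the non-harmonic part (in $w_1$, i.e. the terms with both exponents positive) of $H(e^{i\theta_\infty}+s_\infty w_1)$; writing $H(e^{i\theta_\infty}+s_\infty w_1)=e_\infty H_\infty(w_1)+2\,\mathrm{Re}\,g(w_1)+C$ with $g$ a holomorphic polynomial and $C=H(e^{i\theta_\infty})\in\mathbb{R}$, the map
\begin{equation*}
\Phi:(w_1,w_2)\longmapsto\bigl(e^{i\theta_\infty}+s_\infty w_1,\;e_\infty w_2-2g(w_1)-C\bigr)
\end{equation*}
is a polynomial automorphism of $\mathbb{C}^2$ and a direct check gives $\mathrm{Re}\bigl(e_\infty w_2-2g(w_1)-C\bigr)+H(e^{i\theta_\infty}+s_\infty w_1)=e_\infty\bigl(\mathrm{Re}\,w_2+H_\infty(w_1)\bigr)$, so $\Phi$ carries $M_{H_\infty}$ biholomorphically onto $M_H$. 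This $\Phi$ is exactly the limit of $S_{1/r_n}\circ\phi_{\eta_n}^{-1}$, the renormalization of the given biholomorphisms $\phi_{\eta_n}^{-1}:M_{H_{\eta_n}}\to M_H$ by the dilation automorphism $S_{1/r_n}\in\mathrm{Aut}(M_H)$ (available since $H$ is homogeneous), so one may alternatively conclude via Proposition \ref{P1} after noting $M_{H_{\eta_n}}\to M_{H_\infty}$ in the kernel sense. The step I expect to be delicate is the two-sided estimate $\tau(\eta_n)\approx r_n$: the upper bound is automatic, but the lower bound genuinely needs the order to be $2m$ (for smaller order $s_n\to 0$ and the limit degenerates), and it is there that the normalization $\|H_{\eta_n}\|=1$ must be used carefully.
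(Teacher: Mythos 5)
Your proof is correct and follows essentially the same route as the paper: both establish the two-sided bound $\tau(\eta_n)\approx|\eta_{n1}|\approx\epsilon_n^{1/(2m)}$ from the normalization $\|H_{\eta_n}\|=1$, pass to the limit of the coefficients $a_{j,\bar q}(\eta_n)$ using the homogeneity of $H_{j,\bar q}$, and realize $M_{H_\infty}\simeq M_H$ by an explicit affine map; your parameters $(e^{i\theta_\infty},s_\infty,e_\infty)$ are merely a reparametrization of the paper's $(\alpha,\beta)$, obtained by normalizing with $|\eta_{n1}|$ instead of $\epsilon_n^{1/(2m)}$. The only (harmless) difference is that you spell out why the top-degree part of $H_\infty$ is a positive multiple of $H$, a point the paper leaves implicit.
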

\begin{proof}
Since $\{\eta_n\}$ converges tangentially to order $2m$ to $(0,0)$, it follows that
$|\eta_{n1}|^{2m} \approx \epsilon_n \approx d(\eta_n, \partial \Omega)$. Let $a_{j,\bar q}(\eta_n):=\frac{H_{j,\bar q}(\eta_{n1})\tau(\eta_n)^{j+q}}{(j+q)!\epsilon_n}$ for each $j,q>0$ with $j+q\leq 2m$. Then we have the following estimate
\begin{equation*}
\begin{split}
|a_{j,\bar q}(\eta_n) |\lesssim \frac{|\eta_{n1}|^{2m-j-q}\tau(\eta_n)^{j+q}}{(j+q)!\epsilon_n} \lesssim \Big(\frac{\tau(\eta_n)}{|\eta_{n1}|}\Big)^{j+q}.
\end{split}
\end{equation*}
Since $\|H_{\eta_n}\|=1$, we have $\tau(\eta_n)\gtrsim |\eta_{n1}|\approx \epsilon_n^{1/(2m)}$, and therefore $\tau({\eta}_n)\approx \epsilon_n^{1/(2m)}$. This implies that $deg(H_\infty)=2m$. 
Without loss of generality we can assume that $\lim \frac{\eta_{n1}}{\epsilon_n^{1/(2m)}}=\alpha$ and $\lim \frac{\tau(\eta_{n})}{\epsilon_n^{1/(2m)}}=\beta$.
 We note that
\begin{equation*}
\begin{split}
a_{j,\bar q}(\eta_n)&=\frac{H_{j,\bar q}(\eta_{n1})\tau(\eta_n)^{j+q}}{(j+q)!\epsilon_n}\\
&=\frac{1}{(j+q)!}\Big(\frac{\tau(\eta_n)}{\epsilon_n^{1/(2m)}}\Big)^{j+q} H_{j,\bar q}\Big(\frac{\eta_{n1}}{\epsilon_n^{1/(2m)}}\Big) 
\end{split}
\end{equation*}
for any $j,q>0$. Then we obtain $\lim a_{j,\bar q}(\eta_n)=\frac{1}{(j+q)!}\beta^{j+q} H_{j,\bar q}(\alpha) w_1^j \bar w_1^q$  for each $j,q >0$; hence 
\begin{equation*}
\begin{split}
H_\infty(w_1)&=\sum_{j,q>0}\frac{1}{(j+q)!} \beta^{j+q} H_{j,\bar q}(\alpha)w_1^j\bar w_1^q\\
&=H(\alpha+\beta w_1)-H(\alpha)-2\text{Re} \sum_{j=1}^{2m} \frac{H_j(\alpha )}{j!}(\beta w_1)^j. 
\end{split}
\end{equation*}
So, the holomorphic map given by 
\begin{equation*}
\begin{cases}
{t}_2=w_2-H(\alpha)-2\sum\limits_{j=1}^{2m}\frac{H_j(\alpha)}{j!} (\beta w_1)^j  \\
{t}_1 =\alpha+\beta w_1
\end{cases}
\end{equation*}
is biholomorphic from $M_{H_\infty}$ onto $M_H$.
\end{proof}
\begin{lemma}\label{lem8} If  $\{\eta_n\}\subset M_H$ converges tangentially to order $< 2m$ to $(0,0)$, then $H_\infty=H$.
\end{lemma}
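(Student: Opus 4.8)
The plan is to rerun the normalization analysis from the proof of Lemma~\ref{lem7}, but to exploit the \emph{strict} inequality in the tangency order to pin down the limiting data exactly. Write $r_n:=|\eta_{n1}|$ and $\epsilon_n:=|\rho(\eta_n)|$; the hypothesis of tangential convergence of order $s<2m$ says $\epsilon_n\approx r_n^{s}$, so in particular $r_n^{2m}=o(\epsilon_n)$. As before, let $a_{j,\bar q}(\eta_n)=\frac{H_{j,\bar q}(\eta_{n1})\tau(\eta_n)^{j+q}}{(j+q)!\,\epsilon_n}$ denote the coefficient of $w_1^j\bar w_1^q$ in $H_{\eta_n}$.

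First I would show $\tau(\eta_n)\approx\epsilon_n^{1/(2m)}$. The bound $\tau(\eta_n)\lesssim\epsilon_n^{1/(2m)}$ is the general estimate recorded after the definition of $H_a$. For the reverse bound, note that for $j+q=k<2m$ the homogeneity of $H_{j,\bar q}$ (of degree $2m-k$) gives $|H_{j,\bar q}(\eta_{n1})|\lesssim r_n^{2m-k}$, whence, using $\tau(\eta_n)\lesssim\epsilon_n^{1/(2m)}$ and $\epsilon_n\approx r_n^{s}$,
\begin{equation*}
|a_{j,\bar q}(\eta_n)|\lesssim \frac{r_n^{2m-k}\tau(\eta_n)^{k}}{\epsilon_n}\lesssim r_n^{(2m-s)(2m-k)/(2m)}\longrightarrow 0.
\end{equation*}
Thus every coefficient of degree $<2m$ tends to $0$. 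Since $\|H_{\eta_n}\|=1$, for large $n$ the maximum must be carried by a coefficient with $j+q=2m$; as $H$ has no harmonic terms, at least one such $H_{j,\bar q}$ is a nonzero constant, and the corresponding coefficient equals $\frac{H_{j,\bar q}}{(2m)!}\cdot\frac{\tau(\eta_n)^{2m}}{\epsilon_n}$. Hence $\frac{\tau(\eta_n)^{2m}}{\epsilon_n}$ is bounded away from $0$ and $\infty$, so $\tau(\eta_n)\approx\epsilon_n^{1/(2m)}$, and we may set $\beta:=\lim \tau(\eta_n)/\epsilon_n^{1/(2m)}\in(0,\infty)$.

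The decisive point is the value of $\alpha:=\lim \eta_{n1}/\epsilon_n^{1/(2m)}$. Because the order is strictly below $2m$, we have $|\eta_{n1}|/\epsilon_n^{1/(2m)}\approx r_n^{\,1-s/(2m)}\to 0$, so $\alpha=0$. Feeding $\alpha=0$ and $\beta$ into the same limit computation as in Lemma~\ref{lem7} — the terms with $j+q<2m$ drop out since $H_{j,\bar q}$ is homogeneous of positive degree and $H_{j,\bar q}(0)=0$, while those with $j+q=2m$ converge — yields the closed form
\begin{equation*}
H_\infty(w_1)=H(\alpha+\beta w_1)-H(\alpha)-2\,\mathrm{Re}\sum_{j=1}^{2m}\frac{H_j(\alpha)}{j!}(\beta w_1)^j.
\end{equation*}
Evaluating at $\alpha=0$ and using that $H$ is homogeneous of degree $2m$ with no harmonic terms (so $H(0)=0$ and $H_j(0)=0$ for $1\le j\le 2m$) collapses this to $H_\infty(w_1)=H(\beta w_1)=\beta^{2m}H(w_1)$. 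Finally the normalizations $\|H_\infty\|=\|H\|=1$ force $\beta^{2m}=1$, i.e. $\beta=1$, and therefore $H_\infty=H$.

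I would expect the main obstacle to be the first step, establishing $\tau(\eta_n)\approx\epsilon_n^{1/(2m)}$: unlike in Lemma~\ref{lem7}, here $|\eta_{n1}|\approx\epsilon_n^{1/s}$ is much smaller than $\epsilon_n^{1/(2m)}$, so the quick comparison used there is unavailable, and one must instead argue that the top-degree constant coefficients — which persist precisely because $H$ carries no harmonic terms — are what anchor the normalization. Everything downstream is a routine passage to the limit together with $\|H_\infty\|=\|H\|=1$.
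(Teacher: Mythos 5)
Your proposal is correct and follows essentially the same route as the paper: the central estimate $|a_{j,\bar q}(\eta_n)|\lesssim \big(|\eta_{n1}|^{2m}/\epsilon_n\big)^{(2m-j-q)/(2m)}\to 0$ for $j+q<2m$ is exactly the paper's computation. The only difference is that you spell out the concluding step --- that the normalization $\|H_{\eta_n}\|=\|H\|=1$ forces $\tau(\eta_n)^{2m}/\epsilon_n\to 1$ and hence identifies the surviving top-degree part with $H$ itself --- which the paper compresses into ``and thus $H_\infty=H$.''
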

\begin{proof}
It is easy to see that $\tau({\eta}_n) \lesssim \epsilon_n^{1/(2m)}$. On the other hand, 
since $|\eta_{n1}|^{2m}=o( |\epsilon_n|)$, we have for $j,q\in\mathbb N$ with $j,q>0, j+q<2m$ that
\begin{equation*}
\begin{split}
|a_{j,\bar q}({\eta}_n)|&\lesssim \frac{|\eta_{n1}|^{2m-j-q}\epsilon^{(j+q)/(2m)}_n}{(j+q)!\epsilon_n}\\
                                      &\lesssim  \Big(\frac{|\eta_{n1}|^{2m}}{\epsilon_n}\Big)^{\frac{2m-j-q}{2m}}.
\end{split}
\end{equation*}
Therefore $\lim a_{j,\bar q}({\eta}_n)=0$ for any $j,q>0$ with $j+q<2m$,  and thus $H_\infty=H$.  Hence, the proof is complete.
\end{proof}

\begin{proof}[Proof of Theorem \ref{T1}]
Let $\Omega$ and $\{f_n\}$ be a domain and a sequence, respectively, as in Theorem \ref{T1}. Then, after a change of coordinates, we can assume that $p=(0,0)$ and
$$
\Omega\cap U=\{(z_1,z_2)\in U: \rho(z_1,z_2)=\mathrm{Re}~z_2+H(z_1)<0\}.
$$
Moreover, we may also assume that $\eta_n:=f_n(q)\in U\cap M_H$ for all $n\in \mathbb N^*$. Therefore, it follows from Proposition \ref{prop1}, Lemma \ref{lem7}, and Lemma \ref{lem8} that $\Omega$ is biholomorphically equivalent to $M_H$, which finishes the proof.
\end{proof}
In the case that $\{\eta_n\}$ converges tangentially to order $>2m$ to $(0,0)$, we obtain the following proposition. 

\begin{proposition} Let $\{\eta_n\}\subset M_H$ be a sequence which converges tangentially to order $ >2 m$ to $(0,0)$.
If there exist $j,q>0$ with $j+q<2m$ such that 
$$\Big| \frac{\partial^{j+q}H}{\partial z_1^j\partial \bar z_1^q}(\eta_{n1})\Big|
\approx |\eta_{n1}|^{2m-j-q},$$
then $\tau({\eta}_n)=o(\epsilon_n^{1/(2m)})$, and thus $deg(H_\infty)<2m$.
\end{proposition}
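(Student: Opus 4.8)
The plan is to use the single coefficient picked out by the hypothesis to bound $\tau(\eta_n)$ from above, and then to read off the top-degree part of $H_\infty$ from that bound. First I would record what the tangential order gives. Since $\{\eta_n\}$ converges tangentially to order $s>2m$ to $(0,0)$ and $\mathrm{dist}(\eta_n,\partial M_H)\approx\epsilon_n$, one has $\epsilon_n\approx|\eta_{n1}|^s$ with $|\eta_{n1}|\to 0$, whence
\[
\frac{\epsilon_n}{|\eta_{n1}|^{2m}}\approx|\eta_{n1}|^{s-2m}\to 0,
\]
that is, $\epsilon_n=o(|\eta_{n1}|^{2m})$; in particular $\eta_{n1}\neq 0$ for $n$ large.

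Next I would fix the pair $(j,q)$ with $k:=j+q<2m$ furnished by the hypothesis. Since $\|H_{\eta_n}\|=1$ is the maximum of the moduli of the coefficients of $H_{\eta_n}$, that single coefficient obeys $|a_{j,\bar q}(\eta_n)|\le 1$. Inserting the lower bound $|H_{j,\bar q}(\eta_{n1})|\approx|\eta_{n1}|^{2m-k}$ from the hypothesis into the definition of $a_{j,\bar q}(\eta_n)$ gives
\[
1\ge |a_{j,\bar q}(\eta_n)|=\frac{|H_{j,\bar q}(\eta_{n1})|\,\tau(\eta_n)^{k}}{k!\,\epsilon_n}\gtrsim\frac{|\eta_{n1}|^{2m-k}\,\tau(\eta_n)^{k}}{\epsilon_n},
\]
so $\tau(\eta_n)^{k}\lesssim\epsilon_n/|\eta_{n1}|^{2m-k}$. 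Raising to the power $2m/k$ and dividing by $\epsilon_n$ yields
\[
\frac{\tau(\eta_n)^{2m}}{\epsilon_n}\lesssim\Big(\frac{\epsilon_n}{|\eta_{n1}|^{2m}}\Big)^{(2m-k)/k}\to 0,
\]
because $(2m-k)/k>0$ and the inner ratio tends to $0$ by the first step. Hence $\tau(\eta_n)^{2m}=o(\epsilon_n)$, i.e.\ $\tau(\eta_n)=o(\epsilon_n^{1/(2m)})$, which is the first assertion.

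Finally I would turn to the degree. For every pair $(j,q)$ with $j,q>0$ and $j+q=2m$, the derivative $H_{j,\bar q}$ is homogeneous of degree $0$, hence a constant, so
\[
a_{j,\bar q}(\eta_n)=\frac{H_{j,\bar q}\,\tau(\eta_n)^{2m}}{(2m)!\,\epsilon_n}\to 0
\]
by the bound just obtained. Passing to the limit $H_{\eta_n}\to H_\infty$, every coefficient of $H_\infty$ of total degree $2m$ vanishes; as $H_\infty$ carries no harmonic terms, it has no term of total degree $2m$ whatsoever, and therefore $deg(H_\infty)<2m$.

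The one step that carries the weight is the passage in the second paragraph: the hypothesis is precisely the device that converts the trivial bound $|a_{j,\bar q}(\eta_n)|\le 1$ into an upper bound on $\tau(\eta_n)$, and the point to check carefully is that the exponent $(2m-k)/k$ stays positive, so that the smallness of $\epsilon_n/|\eta_{n1}|^{2m}$ — which holds exactly because the order exceeds $2m$ — really drives $\tau(\eta_n)^{2m}/\epsilon_n$ to zero. Everything else is bookkeeping once the definitions of $\tau(\eta_n)$, $\epsilon_n$ and $\|\cdot\|$ are unwound.
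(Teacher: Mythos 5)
Your proof is correct and is essentially the paper's argument read in the contrapositive direction: the paper supposes $\tau(\eta_n)\approx\epsilon_n^{1/(2m)}$ and shows the coefficient $a_{j,\bar q}(\eta_n)$ would blow up like $\big(|\eta_{n1}|^{2m}/\epsilon_n\big)^{(2m-j-q)/(2m)}\to+\infty$, contradicting $\|H_{\eta_n}\|=1$, whereas you use $|a_{j,\bar q}(\eta_n)|\le 1$ directly to extract $\tau(\eta_n)^{2m}=o(\epsilon_n)$ --- the same inequality doing the same work. Your closing paragraph justifying $\deg(H_\infty)<2m$ (the degree-$2m$ coefficients are constants times $\tau(\eta_n)^{2m}/\epsilon_n\to 0$, and harmonic terms are excluded by the definition of $\mathcal P_{2m}$) correctly fills in a step the paper leaves implicit.
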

\begin{proof} 
Suppose otherwise that $\tau({\eta}_n)\approx \epsilon_n^{1/(2m)}$. Then
since $\epsilon_n=o(|{\eta}_{n1}|^{2m})$, one gets
\begin{equation*}
\begin{split}
|a_{j,\bar q}({\eta}_n)| &\approx \frac{|\eta_{n1}|^{2m-j-q}\epsilon^{(j+q)/(2m)}_n}{(j+q)!\epsilon_n}\\
                                      &\approx  \Big(\frac{|\eta_{n1}|^{2m}}{\epsilon_n}\Big)^{\frac{2m-j-q}{2m}}.
\end{split}
\end{equation*}
This implies that
$$\lim_{n\to \infty} a_{j,\bar q}({\eta}_n)|=+\infty,$$
which is a contradiction. Thus, the proof is complete.
\end{proof}
\begin{example}
Let $E_{1,2}:=\{(z_1,z_2)\in \mathbb C^2:\mathrm{Re}~z_2+|z_1|^4<0\}$. Then the sequence $\{( 1/\sqrt[4]{n},-2/n)\}$
converges tangentially to order $4$ to $(0,0)$. But the sequence $\{(1/\sqrt[4]{n},-1/n-1/n^2)\} $
converges tangentially to order $8$ to $(0,0)$. 

 Let $\rho(z_1,z_2)=\mathrm{Re}~z_2+|z_1|^4$ and let $\eta_n=( 1/\sqrt[4]{n},-1/n-1/n^2)$ for every $n\in \mathbb N^*$. 
We see that $\rho(\eta_n)=-1/n-1/n^2+1/n=-1/n^2\approx -\mathrm{dist}(\eta_n,\partial E_{1,2})$. Set
$\epsilon_n=|\rho(\eta_n)|=1/n^2$. Then
\begin{equation*}
\begin{split}
&\rho(z_1,z_2)=
\mathrm{Re}(z_2)+ |\frac{1}{\sqrt[4]{n}}+z_1-\frac{1}{\sqrt[4]{n}}|^4\\
                      &= \mathrm{Re}(z_2)+\frac{1}{n}+ |z_1-\frac{1}{\sqrt[4]{n}}|^4+\frac{1}{\sqrt{n}}(2\text{Re}(z_1-\frac{1}{\sqrt[4]{n}}))^2
+ \frac{4}{\sqrt[4]{n}} |z_1-\frac{1}{\sqrt[4]{n}}|^2 \mathrm{Re}(z_1-\frac{1}{\sqrt[4]{n}})\\
&+\frac{4}{\sqrt{n}}\frac{1}{\sqrt[4]{n}} \mathrm{Re}(z_1-\frac{1}{\sqrt[4]{n}}) +\frac{2}{\sqrt{n}}|z_1-\frac{1}{\sqrt[4]{n}}|^2\\
&= \mathrm{Re}(z_2)+\frac{1}{n} +\frac{4}{\sqrt{n}\sqrt[4]{n}}\mathrm{Re}(z_1-\frac{1}{\sqrt[4]{n}})
+\frac{2}{\sqrt{n}}\mathrm{Re}((z_1-\frac{1}{\sqrt[4]{n}})^2) + |z_1-\frac{1}{\sqrt[4]{n}}|^4\\
&+\frac{4}{\sqrt{n}}|z_1-\frac{1}{\sqrt[4]{n}}|^2+\frac{4}{\sqrt[4]{n}}|z_1-\frac{1}{\sqrt[4]{n}}|^2 \mathrm{Re}(z_1-\frac{1}{\sqrt[4]{n}}).
\end{split}
\end{equation*}

A direct calculation shows that $\tau_n:=\tau(\eta_n)=\frac{1}{2 n^{3/4}}$ for all $n=1,2,\ldots$ and thus the automorphism $\phi_{{\eta}_n}$ is given by
$$\phi_{{\eta}_n} ^{-1}(w_1,w_2)= \Big (\frac{1}{\sqrt[4]{n}}+\tau_n w_1,\epsilon_n w_2-\frac{1}{n}- \frac{4}{\sqrt{n}\sqrt[4]{n}}\tau_n w_1-\frac{2}{\sqrt{n}}\tau_n^2 w_1^2 \Big);$$
\begin{equation*}
\begin{split}
\epsilon_n^{-1}\rho\circ \phi_{{\eta}_n} ^{-1}(w_1,w_2)&= \epsilon_n^{-1}\rho \Big (\frac{1}{\sqrt[4]{n}}+\tau_n w_1,\epsilon_n w_2-\frac{1}{n}- \frac{4}{\sqrt{n}\sqrt[4]{n}}\tau_n w_1-\frac{2}{\sqrt{n}}\tau_n^2 w_1^2 \Big)\\
&=\mathrm{Re}(w_2) + \frac{1}{16n}|w_1|^4+|w_1|^2
+ \frac{1}{2\sqrt[4]{n}}|w_1|^2\mathrm{Re}(w_1).
\end{split}
\end{equation*} 
We now show that there do not exist a sequence $\{f_n\}\subset \mathrm{Aut}(E_{1,2})$ and $a\in E_{1,2}$ such that
$\eta_n=f_n(a)\to (0,0)\in \partial E_{1,2}$ as $n\to \infty$. Indeed, suppose that there exist such a sequence $\{f_n\}$ 
and such a point $a\in E_{1,2}$. Then by Proposition \ref{prop1}, $E_{1,2}$ is biholomorphically equivalent to the following 
domain $D:=\{(w_1,w_2)\in \mathbb C^2: \mathrm{Re}~w_2+ |w_1|^2<0\}\simeq \mathbb B^2 $. It is impossible.
\end{example}
\section{Automorphism group of a model in $\mathbb C^2$}
In this section, we consider a model
$$
M_H:=\{(z_1,z_2)\in \mathbb C^2:\mathrm{Re}~z_2+H(z_1)<0\},
$$
 where 
\begin{equation}\label{eq1613}
H(z_1)=\sum_{j=1}^{2m-1}a_{2m-j}z_1^j\bar z_1^{2m-j}=a_m|z_1|^{2m}+2 \sum_{j=1}^{m-1} |z_1|^{2j}\mathrm{Re}(a_j z_1^{2m-2j})
\end{equation}
is a nonzero real valued homogeneous polynomial of degree $2m$, with $a_j\in\mathbb C$ and $a_j=\overline{a_{2m-j}}$. We will give the explicit description of $\mathrm{Aut}(M_H)$.

The D'Angelo type of $\partial M_H$ is given by the following.
\begin{lemma}\label{L6}
$\tau(\partial M_H, (\alpha,-H(\alpha)+it))=m_\alpha$ for all $\alpha\in \mathbb C$ and for all $t\in \mathbb R$, where
$$
m_\alpha=\min\{j+q~|~j,q>0, \frac{\partial^{j+q} H(\alpha)}{\partial z_1^j \partial \bar z_1^q}\ne 0\}.
$$
\end{lemma}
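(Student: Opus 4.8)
The plan is to compute the D'Angelo type directly from the definition, using the smooth defining function $\rho(z_1,z_2)=\mathrm{Re}~z_2+H(z_1)$, whose gradient never vanishes because $\partial\rho/\partial(\mathrm{Re}~z_2)=1$. Since $\rho$ is independent of $\mathrm{Im}~z_2$, the whole computation will be manifestly independent of $t$, so I fix the base point $P_\alpha=(\alpha,-H(\alpha)+it)$ and note $\rho(P_\alpha)=0$. The first step is to write an arbitrary nonconstant germ of holomorphic curve through $P_\alpha$ as $\gamma(\zeta)=(\alpha+\gamma_1(\zeta),-H(\alpha)+it+\gamma_2(\zeta))$ with $\gamma_1(0)=\gamma_2(0)=0$, and to Taylor-expand $H$ about $\alpha$. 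Using that $H$ is real, the pure holomorphic and antiholomorphic terms combine into twice the real part of a holomorphic expression, and separating these from the genuinely mixed terms gives
\begin{equation*}
\rho(\gamma(\zeta))=\mathrm{Re}\,h(\zeta)+P(\zeta),\quad h:=\gamma_2+2\sum_{j>0}\frac{H_j(\alpha)}{j!}\gamma_1^{j},\quad P:=\sum_{j,q>0}\frac{H_{j,\bar q}(\alpha)}{j!\,q!}\gamma_1^{j}\bar\gamma_1^{q}.
\end{equation*}

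The crucial structural observation, which I would isolate as the heart of the argument, is that $\mathrm{Re}\,h$ and $P$ have disjoint monomial supports in $(\zeta,\bar\zeta)$: every monomial of $\mathrm{Re}\,h$ is a pure power $\zeta^\ell$ or $\bar\zeta^\ell$ (one exponent is $0$), whereas every monomial of $P$ arises from $\gamma_1^{j}\bar\gamma_1^{q}$ with $j,q>0$ and hence has both exponents strictly positive. Consequently no cancellation between the two pieces is possible, and $\nu(\rho\circ\gamma)=\min\{\nu(h),\nu(P)\}$. Writing $k_1:=\nu(\gamma_1)$, a short computation shows $\nu(P)=m_\alpha k_1$: the terms with $j+q=m_\alpha$ contribute the distinct leading monomials $\zeta^{jk_1}\bar\zeta^{qk_1}$, at least one with nonzero coefficient by the very definition of $m_\alpha$, so they cannot cancel each other either.

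For the lower bound $\tau\ge m_\alpha$ I would simply exhibit the curve $\gamma(\zeta)=(\alpha+\zeta,\,-H(\alpha)+it-2\sum_{j=1}^{2m}\frac{H_j(\alpha)}{j!}\zeta^{j})$, which kills the holomorphic part ($h\equiv0$), so $\rho\circ\gamma=P$ has order $m_\alpha$ while $\nu(\gamma)=1$, giving ratio $m_\alpha$. For the upper bound $\tau\le m_\alpha$ I would run a case analysis on $k:=\nu(\gamma)=\min(\nu(\gamma_1),\nu(\gamma_2))$. If $\gamma_1\equiv0$, then $\rho\circ\gamma=\mathrm{Re}\,\gamma_2$ has order exactly $\nu(\gamma_2)=\nu(\gamma)$, hence ratio $1\le m_\alpha$ (note $m_\alpha\ge2$). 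If $\gamma_1\not\equiv0$, then when $\nu(\gamma_2)<k_1$ the term $\gamma_2$ dominates $h$, so $\nu(h)=\nu(\gamma_2)=k$, whence $\nu(\rho\circ\gamma)=k$ and the ratio is $1$; and when $\nu(\gamma_2)\ge k_1$ one has $k=k_1$ and $\nu(\rho\circ\gamma)=\min(\nu(h),m_\alpha k_1)\le m_\alpha k_1=m_\alpha k$. In every case $\nu(\rho\circ\gamma)/\nu(\gamma)\le m_\alpha$, which together with the lower bound finishes the proof. (The degenerate situation where no mixed derivative survives at $\alpha$ is covered automatically, as both $\nu(P)$ and $m_\alpha$ are then $+\infty$.)

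The step I expect to require the most care is the upper bound: one must verify both that the pluriharmonic part genuinely cannot interact with the mixed part (the disjoint-support claim) and that the order of $P$ is governed \emph{precisely} by $m_\alpha\,\nu(\gamma_1)$ rather than being raised by accidental cancellation among the minimal-degree mixed monomials. This is exactly what forces the reciprocal bookkeeping between $\nu(\gamma_1)$ and $\nu(\gamma_2)$ to close up at the single value $m_\alpha$, independently of how one balances the two components of the curve.
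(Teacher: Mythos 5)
Your proof is correct and follows essentially the same route as the paper: the paper first performs the affine change of variables killing the pluriharmonic part of $H$ at $\alpha$ (which is exactly your choice of test curve and your function $h$) and then asserts the type computation, while you carry out that computation in full via the disjoint-support argument for $\mathrm{Re}\,h$ versus the mixed part $P$ and the order bookkeeping $\nu(P)=m_\alpha\,\nu(\gamma_1)$. In effect you have supplied the details behind the paper's ``by a computation'' step; nothing is missing.
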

\begin{proof}
By the following change of variables
\begin{equation*}
\begin{cases}
{w}_2=z_2+H(\alpha)+2\sum\limits_{j=1}^{2m}\frac{H_j(\alpha)}{j!}(z_1-\alpha)^j   \\
{w}_1 =z_1-\alpha,
\end{cases}
\end{equation*}
the defining function for $M_H$ is now given by
 $$
\rho(w_1,w_2)=\mathrm{Re}~w_2+ \sum_{j,q>0} \frac{H_{j,\bar q}(\alpha) }{(j+q)!}  w_1^j\bar w_1^q.
$$
By a computation, we get $\tau(\partial M_H, (\alpha,-H(\alpha)+it))=m_\alpha$, and thus the proof is complete.
\end{proof}

  Let $P_k(\partial M_H)$ the set of all points in $\partial M_H$ of D'Angelo type $k$ ($k$ is either a positive integer or infinity). Let us denote by
$\Gamma:=\{(z_1,-H(z_1)+it)~|~ t\in\mathbb R, z_1\in\mathbb C \;\text{with} \;\mathrm{Re}(e^{i\nu} z_1)=0\}$ if $H(z_1)=a\big[(2\mathrm{Re}(e^{i\nu}z_1))^{2m}-2\mathrm{Re}(e^{i\nu}z_1)^{2m}\big]$ for some $a\in\mathbb R^*$ and for some $\nu\in [0,2\pi)$ and by $\Gamma:=\{(0,it)~|~t\in\mathbb R\}$ if otherwise.

\begin{lemma}\label{L14} If $m\geq 2$, then
$P_{2m}(\partial M_H)=\Gamma$ and $\tau(\partial M_H, p)<2m$ for all $p\in \partial M_H\setminus \Gamma$.
\end{lemma}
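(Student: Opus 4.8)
The plan is to use Lemma~\ref{L6}, which reduces the computation of the D'Angelo type at a boundary point $(\alpha,-H(\alpha)+it)$ to the purely algebraic quantity
$$
m_\alpha=\min\{j+q~|~j,q>0,\; H_{j,\bar q}(\alpha)\ne 0\}.
$$
Since $\tau\le 2m$ always and $m_\alpha=2m$ forces $H_{j,\bar q}(\alpha)=0$ for all $j,q>0$ with $j+q<2m$, the entire problem becomes: \emph{determine the locus of $\alpha\in\mathbb C$ at which all mixed derivatives $H_{j,\bar q}$ of total order strictly below $2m$ vanish.} I would phrase the whole argument as an analysis of this vanishing locus, and then translate back to $\partial M_H$ via the parametrization $\alpha\mapsto(\alpha,-H(\alpha)+it)$.

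First I would record the key structural fact that, because $H$ is homogeneous of degree $2m$, each $H_{j,\bar q}$ is homogeneous of bidegree $(m-j,m-q)$ in $(z_1,\bar z_1)$, hence a polynomial of total degree $2m-j-q$; in particular $H_{m-1,\bar 1}$ and its conjugate $H_{1,\overline{m-1}}$ (plus the real-type combinations coming from $H_{j,\bar q}$ with $j+q=2m-1$) are homogeneous of degree $1$ in $z_1,\bar z_1$, i.e.\ real-linear functionals $\alpha\mapsto \mathrm{Re}(e^{i\nu}\alpha)$ up to scaling. The crucial observation is that if $m_\alpha=2m$ then in particular the order-$(2m-1)$ derivatives vanish at $\alpha$, and this already cuts $\alpha$ down to a single real line through the origin of the form $\mathrm{Re}(e^{i\nu}\alpha)=0$ (or forces $\alpha=0$). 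So the candidate locus is a union of $\{0\}$ with at most one line; I would then determine which of these two alternatives occurs according to the shape of $H$.

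Next I would carry out the case division suggested by the definition of $\Gamma$. Along the line $\mathrm{Re}(e^{i\nu}\alpha)=0$, I would substitute and ask when \emph{all} lower-order derivatives — not just the top two — vanish simultaneously; expanding $H$ around a point $\alpha_0$ with $\mathrm{Re}(e^{i\nu}\alpha_0)=0$ and comparing with the homogeneous expansion in Lemma~\ref{L6}, the vanishing of every $H_{j,\bar q}(\alpha_0)$ with $j+q<2m$ is equivalent to saying that near $\alpha_0$ the polynomial $H$ depends, to all orders below $2m$, only on the single real direction $\mathrm{Re}(e^{i\nu}z_1)$. A direct homogeneity/rescaling argument should show this happens precisely when $H$ has the special form
$$
H(z_1)=a\big[(2\mathrm{Re}(e^{i\nu}z_1))^{2m}-2\mathrm{Re}(e^{i\nu}z_1)^{2m}\big]
$$
for some $a\in\mathbb R^*$, which is exactly the defining condition for the first branch of $\Gamma$; otherwise the only point where all lower derivatives vanish is $\alpha=0$, giving $\Gamma=\{(0,it)\}$. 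In either case every $\alpha$ off this locus has $m_\alpha<2m$, which is the asserted strict inequality on $\partial M_H\setminus\Gamma$. The translation between $t\in\mathbb R$ and the second coordinate $-H(\alpha)+it$ is immediate, since $m_\alpha$ is independent of $t$.

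The main obstacle I anticipate is the \emph{reverse} implication in the case analysis: it is routine that $H$ of the special form has $m_\alpha=2m$ along the line, but showing conversely that simultaneous vanishing of \emph{all} $H_{j,\bar q}$ with $j+q<2m$ at some $\alpha_0\ne0$ \emph{forces} $H$ into that rigid form requires solving the resulting overdetermined linear system on the coefficients $a_j$ of $H$ in \eqref{eq1613}. I would handle this by choosing coordinates so that $\nu=0$ (replacing $z_1$ by $e^{i\nu}z_1$, which preserves the class of $H$), writing $z_1=x+iy$, and checking that vanishing of all pure-$y$--derivatives below order $2m$ along $\{x=0\}$ collapses $H$ to a function of $x$ alone modulo the unique harmonic-free homogeneous correction, yielding the stated expression after one also verifies that $m\ge2$ is needed so that the top-order term $a\,(\cdots)^{2m}$ genuinely produces degeneracy along a full line rather than an isolated point. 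This coefficient bookkeeping is the one genuinely delicate computation; everything else reduces to homogeneity and the already-proved Lemma~\ref{L6}.
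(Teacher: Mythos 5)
Your reduction via Lemma~\ref{L6} to the vanishing locus $\{\alpha : H_{j,\bar q}(\alpha)=0 \text{ for all } j,q>0,\ j+q<2m\}$ is exactly the paper's starting point, and your observation that the order-$(2m-1)$ derivatives confine this locus to $\{0\}$ or a single real line is correct in substance (though as written it needs the check that not all of them vanish identically, which follows from $H\not\equiv 0$ and \eqref{eq1613}; also $c\,\bar z_1+d\,z_1$ is not ``$\mathrm{Re}(e^{i\nu}z_1)$ up to scaling'' --- its zero set is a line only when $|c|=|d|$ --- and the relevant derivatives are $H_{2m-2,\bar 1}$, $H_{1,\overline{2m-2}}$, not $H_{m-1,\bar 1}$, $H_{1,\overline{m-1}}$). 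The genuine gap is that the heart of the lemma --- showing that the existence of a single point $\alpha_0\ne 0$ with $m_{\alpha_0}=2m$ forces $H$ into the rigid form $a\bigl[(2\mathrm{Re}(e^{i\nu}z_1))^{2m}-2\mathrm{Re}(e^{i\nu}z_1)^{2m}\bigr]$ --- is precisely the step you defer as ``delicate coefficient bookkeeping,'' and the sketch you offer for it is not a correct reformulation: the hypothesis $m_{\alpha_0}=2m$ concerns the mixed derivatives $\partial_{z_1}^j\partial_{\bar z_1}^q H$ at the single point $\alpha_0$, not the ``vanishing of all pure-$y$-derivatives below order $2m$ along $\{x=0\}$,'' so it is not clear that your proposed overdetermined linear system is the right one or that it closes.

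The paper resolves exactly this step with a short argument you are missing: since the degree-$2m$ part of the Taylor expansion of $H$ at $\alpha_0$ is $H$ itself, the condition $H_{j,\bar q}(\alpha_0)=0$ for all $j,q>0$ with $j+q<2m$ gives the translation-invariance $H_{1,\bar 1}(\alpha_0+z_1)=H_{1,\bar 1}(z_1)$ for all $z_1$. Normalizing $\alpha_0=1$ by an affine change, $f(x,y):=H_{1,\bar 1}(x+iy)$ is a polynomial periodic in $x$, hence independent of $x$, hence by homogeneity $f=\beta y^{2m-2}$; undoing the normalization gives $H_{1,\bar 1}(z_1)=\beta\bigl(\mathrm{Re}(e^{i\nu}z_1)\bigr)^{2m-2}$ with $\mathrm{Re}(e^{i\nu}\alpha_0)=0$, and integrating back (using homogeneity and the absence of harmonic terms) recovers the rigid form of $H$, i.e.\ membership of $(\alpha_0,-H(\alpha_0)+it)$ in $\Gamma$. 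Without this (or an equivalent) mechanism, your proposal establishes the easy inclusion and the shape of the candidate locus but not the classification that makes the lemma true.
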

\begin{proof}

It is not hard to show that $\Gamma\subset P_{2m}(\partial M_H)$. Now let $p=(\alpha, -H(\alpha)+it)~(\alpha\ne 0)$  be any boundary point in $\partial M_H\setminus \Gamma$. By Lemma \ref{L6}, we see that $\tau(\partial M_H, p)=m_\alpha\leq 2m$. We will prove that $\tau(\partial M_H, p)<2m$. Indeed, suppose that, on the contrary, $\tau(\partial M_H, p)=m_\alpha=2m$. This implies that $H_{j,\bar q}(\alpha)=0$ for all $j,q>0$ and $j+q<2m$ and thus $H_{1,\bar 1}(\alpha+z_1)=H_{1,\bar 1}(z_1)$ for all $z_1\in \mathbb C$. Let $f(x,y):=H_{1,\bar 1}(x+iy)$ for all $z_1=x+iy\in \mathbb C$. By a change of affine coordinates in $\mathbb C$, we may assume that $\alpha=(1,0)$ and thus $f(x+1,y)=f(x,y)$ for all $(x,y)\in \mathbb R^2$. Hence, for each $y\in\mathbb R$ $f(x,y)$ is a periodic polynomial in $x$, and thus     
$f(x,y)$ does not depend on $x$, i.e., $f(x,y)=\beta y^{2m-2}$ for some $\beta\in\mathbb R$. 

Therefore by the above, we conclude that $H_{1,\bar 1}(z_1)=\beta(\mathrm{Re}(e^{i\nu} z_1))^{2m-2}$ for some $\beta\in\mathbb R^*$ and for some $\nu\in [0,2\pi)$ and $\alpha$ satisfies $\mathrm{Re}(e^{i\nu}\alpha)=0$. Since $H$ is a homogeneous polynomial of degree $2m$ without harmonic terms, it is easy to show that $H(z_1)=a\big[(2\mathrm{Re}(e^{i\nu} z_1))^{2m}-2\mathrm{Re}(e^{i\nu} z_1)^{2m}\big]$ for some $a\in\mathbb R^*$ and $(\alpha, -H(\alpha)+ it)\in \Gamma$, which is impossible. Thus the proof is complete.
\end{proof}
We recall the following lemma, proved by F. Berteloot (see  \cite{Ber2}), which is the main ingredient in the proof of Theorem \ref{T2}. 
\begin{lemma}[F. Berteloot]\label{L7} Let $Q\in \mathcal{P}_{2m}$ and $H\in \mathcal{H}_{2m}$. Suppose that $\psi: M_H\to M_Q$ is a biholomorphism. Then there exist $t_0\in\mathbb R$ and $z_0\in \partial M_Q$ such that $\psi$ and $\psi^{-1}$ extend to be holomorphic in neighborhoods of $(0,it_0)$ and $z_0$, respectively. Moreover, the homogeneous part of higher degree in $Q$ is equal to $\lambda H(e^{i\nu}z)$ for some $\lambda>0$ and $\nu \in [0,2\pi)$.
\end{lemma}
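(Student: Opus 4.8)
The plan is to combine three ingredients: boundary regularity at points of maximal type, an explicit affine normalization of $M_Q$ at such a point, and a renormalization by the dilation automorphisms of $M_H$ that yields an \emph{equivariant} biholomorphism between two homogeneous models, from which the shape of the leading part of $Q$ can be read off. First I would record the structural facts. Since $H$ is subharmonic, $M_H$ is pseudoconvex, hence so is $M_Q\cong M_H$, and $Q$ is subharmonic; as $Q\in\mathcal{P}_{2m}$ has degree $\le 2m$, every smooth boundary point of either model is of finite type $\le 2m$. By Lemma \ref{L14} the points $(0,it)$ lie in $P_{2m}(\partial M_H)$, i.e. they realize the maximal type $2m$. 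Biholomorphisms between pseudoconvex finite-type domains in $\mathbb C^2$ extend smoothly to the boundary near finite-type points, and since $\partial M_H$ and $\partial M_Q$ are real-algebraic, the smooth CR extension is holomorphic across the boundary by the reflection principle. Applying this near $(0,it_0)$ gives a holomorphic extension of $\psi$ there; setting $z_0:=\psi(0,it_0)\in\partial M_Q$, the local inverse provides a holomorphic extension of $\psi^{-1}$ near $z_0$. Because the D'Angelo type is a biholomorphic invariant preserved by the extension, $z_0$ has type $2m$; in particular $\deg Q=2m$ and the top-degree part $Q_{2m}$ is nonzero.

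Next I would normalize $M_Q$ at $z_0=(\alpha_0,-Q(\alpha_0)+it_1)$ using the affine, triangular change of coordinates from the proof of Lemma \ref{L6}. Its defining function becomes $\mathrm{Re}\,w_2+\sum_{j,q>0}\frac{Q_{j,\bar q}(\alpha_0)}{(j+q)!}w_1^j\bar w_1^q$, where $Q_{j,\bar q}(\alpha_0)$ denotes $\frac{\partial^{j+q}Q}{\partial z_1^j\partial\bar z_1^q}(\alpha_0)$. The type-$2m$ condition forces $Q_{j,\bar q}(\alpha_0)=0$ for $j+q<2m$, while $\deg Q=2m$ forces the derivatives of order $>2m$ to vanish, so the sum collapses to exactly $Q_{2m}(w_1)$. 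Thus this affine map is a biholomorphism $C:M_Q\to M_{Q_{2m}}$, and $\Phi:=C\circ\psi$ is a biholomorphism between the two homogeneous models $M_H$ and $M_{Q_{2m}}$ that extends holomorphically at the distinguished boundary point $(0,it_0)$.

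Both homogeneous models carry the anisotropic dilations $\delta_s:(z_1,z_2)\mapsto(e^sz_1,e^{2ms}z_2)$ as automorphisms, so I would set $\Psi_s:=\delta_{-s}\circ\Phi\circ\delta_s$, a family of biholomorphisms $M_H\to M_{Q_{2m}}$ satisfying the algebraic identity $\Psi_s\circ\delta_r=\delta_r\circ\Psi_{s+r}$. Using Lemma \ref{L:4} and Proposition \ref{P1}, together with control of the orbit of a fixed base point (via the extension of $\Phi$ at $(0,0)$) to rule out compact divergence, a subsequence $\Psi_{s_n}$ converges to a biholomorphism $\hat\Phi:M_H\to M_{Q_{2m}}$, and passing to the limit in the identity gives the equivariance $\hat\Phi\circ\delta_r=\delta_r\circ\hat\Phi$. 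Writing $\hat\Phi=(\hat\Phi_1,\hat\Phi_2)$, equivariance forces each component to be weighted-homogeneous (assigning $z_1,z_2$ the weights $1$ and $2m$), whence $\hat\Phi(z)=(bz_1,\,cz_2+b'z_1^{2m})$ for constants $b,c,b'$. Matching the boundaries along $\mathrm{Re}\,z_2=-H(z_1)$: since $\mathrm{Im}\,z_2$ is free, $c$ is real (and positive to preserve the side of the boundary), and the remaining identity $-cH(z_1)+\mathrm{Re}(b'z_1^{2m})+Q_{2m}(bz_1)=0$ separates into its harmonic part $\mathrm{Re}(b'z_1^{2m})\equiv0$ (so $b'=0$) and the no-harmonic-terms identity $Q_{2m}(bz_1)=cH(z_1)$. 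Writing $b=|b|e^{-i\nu}$ then gives $Q_{2m}(z)=\lambda H(e^{i\nu}z)$ with $\lambda=c|b|^{-2m}>0$, which is the asserted conclusion.

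The main obstacle is the convergence step in the scaling: one must exclude that $\{\Psi_{s_n}\}$ is compactly divergent and check that the limit is onto $M_{Q_{2m}}$ rather than degenerate. This is precisely where Proposition \ref{P1} and Berteloot's Lemma \ref{L:4} are indispensable, the delicate point being to track the image of a base point under the competing dilations $\delta_{-s}$ and $\delta_s$ and to use the holomorphic extension of $\Phi$ at $(0,0)$ to trap it in a compact subset of $M_{Q_{2m}}$; some care is also needed to ensure the limit relation $\hat\Phi\circ\delta_r=\delta_r\circ\hat\Phi$ holds along the shifted subsequence. A secondary technical point is upgrading the smooth boundary extension to a holomorphic one, which is routine given the algebraicity of $\partial M_H$ and $\partial M_Q$ via the reflection principle, but is exactly what legitimizes the first assertion of the lemma.
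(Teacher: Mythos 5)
Your proposal has a genuine gap at the very first step, and it is precisely the step that the paper's proof spends most of its effort on. You write ``Applying this near $(0,it_0)$ gives a holomorphic extension of $\psi$ there; setting $z_0:=\psi(0,it_0)\in\partial M_Q$'' as if $t_0$ were arbitrary and the boundary extension were automatic. But $M_H$ and $M_Q$ are \emph{unbounded} models, and a priori the cluster set of $\psi$ at every point $(0,it)$ could consist of the point at infinity alone, in which case there is no finite $z_0\in\partial M_Q$ and no local extension to speak of. Ruling this out is the first nontrivial claim of the lemma. The paper does it by invoking the Bedford--Pinchuk peak function at infinity: a holomorphic $\phi$ on $M_Q$, continuous on $\overline{M_Q}$, with $|\phi|<1$ inside and $\phi\to 1$ at infinity; if $\liminf_{x\to 0^-}|\psi(0,x+it)|=+\infty$ for every $t$, then $\phi\circ\psi$ would be identically $1$ on the analytic disc $\{z_1=0,\ \mathrm{Re}\,z_2<0\}$, a contradiction. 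Only after securing a finite cluster point $z_0$ does the paper get continuity up to the boundary (via Berteloot's attraction-of-analytic-discs result), then smoothness (Bell), then the holomorphic extension (Bell--Catlin). Relatedly, the extension theorem you invoke (``biholomorphisms between pseudoconvex finite-type domains in $\mathbb C^2$ extend smoothly near finite-type points'') is a statement about bounded domains or about maps already known to be continuous up to the boundary; for these unbounded models the continuity is exactly the hard part, so the reflection-principle shortcut does not close the gap.

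The second half of your proposal --- normalizing $M_Q$ at $z_0$ to $M_{Q_{2m}}$ and running the anisotropic rescaling $\delta_{-s}\circ\Phi\circ\delta_s$ to extract an equivariant limit --- is a reasonable reconstruction of Berteloot's original argument, which the paper compresses into ``the conclusion follows easily.'' Two caveats there: you need the type at $z_0$ to equal $2m$ exactly (which does follow from invariance once the extension is in hand), and, more seriously, for $e^{-2ms}\Phi_2(\delta_s z)$ to remain bounded as $s\to-\infty$ you must first show that the pure terms $z_1^k$ with $1\le k<2m$ in the Taylor expansion of $\Phi_2$ at the origin vanish (this follows by restricting the boundary identity $\mathrm{Re}\,\Phi_2+Q_{2m}(\Phi_1)=0$ to $\partial M_H$ and comparing orders, but it is not automatic from equivariance and is exactly the point where a naive limit can blow up). As written, the proposal would need both the peak-function step and this order-of-vanishing step filled in before it constitutes a proof.
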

\begin{proof}
According to \cite{B-P2}, there exists a holomorphic function $\phi$ on $M_Q$ which is continuous on $\overline{M_Q}$ such that $|\phi|<1$ for $z\in M_Q$ and tends to $1$ at infinity. Let $\psi: M_H\to M_Q$ be a biholomorphism. We claim that there exists $t_0\in \mathbb R$ such that $\lim_{x\to 0^-} \inf |\psi (0',x+it_0)|<+\infty$. 
Indeed, if this would not be the case, the function $\phi\circ\psi$ would be equal to $1$ on the half plane $\{(z_1,z_2)\in \mathbb C^2\colon\mathrm{Re}~z_2<0, z_1=0\}$ and this is impossible since $|\phi|<1$ for $|z|\gg 1$. Therefore, we may assume that there exists a sequence $x_k<0$ such that $\lim x_k=0$ and $\lim\psi (0,x_k+it_0)=z_0\in \partial M_Q$. It is proved in \cite{Ber} that under these circumstances $\psi$ extends homeomorphically to $\partial M_H$ on some neighbourhood of $(0,it_0)$. Then the result of Bell (see \cite{Bell}) shows that this extension is actually diffeomorphic. Moreover, it follows from \cite[Theorem 3]{Bell-Catlin} (see also \cite{DP, SV}) $\psi$ and $\psi^{-1}$ extend to be holomorphic in neighborhoods of $(0,it_0)$ and $z_0$, respectively. Therefore, the conclusion follows easily. 
\end{proof}

Now we recall two basic integer valued invariants used in the normal form construction in \cite{Kol}. Let $l=m_0<m_1<\cdots<m_p\leq m$ be indices in (\ref{eq1613}) for which $a_{m_i}\ne 0$. Denote by $L$ the greatest common divisor of $2m-2m_0, 2m-2m_1, \ldots, 2m-2m_p$. If $l=m$, then $H(z_1)=a_m |z_1|^{2m}~(a_m>0)$ and it is known that $M_H$ is biholomorphically equivalent to the domain
$$
E_{1,m}=\{(z_1,z_2)\in \mathbb C^2\colon |z_2|^2+|z_1|^{2m}<1\}.
$$
The automorphism group of $E_{1,m}$ is well-known (see \cite[Example 9, p. 20]{GKK}). So, in what follows we only consider the case $l<m$. Moreover, we consider the model $\Omega_m=\{(z_1,z_2)\in\mathbb C:\mathrm{Re}~z_2+(\mathrm{Re}~z_1)^{2m}<0\}$ and others which are not biholomorphically equivalent to it. 
\begin{remark} If $H(z_1)=a\big[(2\mathrm{Re}(e^{i\nu}z_1))^{2m}-2\mathrm{Re}(e^{i\nu}z_1)^{2m}\big]$ for some $a>0$ and for some $\nu\in [0,2\pi)$, then $M_H\simeq  \Omega_m$ and $L=2$. Indeed, $L=2$ is obvious. Now let us denote by $\Phi:\mathbb C^2\to \mathbb C^2$ the bihomorphism defined by $w_2=z_2-2a(e^{i\nu}z_1)^{2m};w_1=2a^{1/2m}e^{i\nu}z_1$. Then it is easy to check that $\Omega_m=\Phi(M_H)$. Hence, the assertion follows.
\end{remark}

\begin{lemma}\label{L15}
$H(\exp (i\theta)z_1)=H(z_1)$ for all $z_1 \in \mathbb C$ if and only if $\exp(i\theta)$ is an $L$-root of unity.
\end{lemma}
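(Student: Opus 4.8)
The statement to prove is that $H(e^{i\theta}z_1) = H(z_1)$ for all $z_1 \in \mathbb C$ if and only if $e^{i\theta}$ is an $L$-th root of unity, where $L = \gcd(2m-2m_0, \ldots, 2m-2m_p)$ and the $m_i$ index the nonzero coefficients $a_{m_i}$ in the expansion \eqref{eq1613}. The plan is to work directly with the monomial expansion of $H$ and compare coefficients after the rotation. The key observation is that rotating $z_1 \mapsto e^{i\theta}z_1$ acts on the term $a_{2m-j}z_1^j\bar z_1^{2m-j}$ by the scalar $e^{i\theta(j-(2m-j))} = e^{i\theta(2j-2m)}$, so the rotated polynomial is $H(e^{i\theta}z_1) = \sum_{j=1}^{2m-1} a_{2m-j}\, e^{i\theta(2j-2m)}\, z_1^j\bar z_1^{2m-j}$.

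First I would establish the easy direction and the coefficient-matching principle. Since the monomials $z_1^j\bar z_1^{2m-j}$ are linearly independent as functions on $\mathbb C$, the identity $H(e^{i\theta}z_1)=H(z_1)$ holds for all $z_1$ if and only if $a_{2m-j}\,e^{i\theta(2j-2m)} = a_{2m-j}$ for every $j$, i.e. $a_{2m-j}\big(e^{i\theta(2j-2m)}-1\big)=0$ for each $j$. Writing this in terms of the indexing set, using the convention in \eqref{eq1613} that the nonzero coefficients occur exactly at the indices $j = m_i$ (and their conjugate partners $j = 2m-m_i$), the condition becomes: $e^{i\theta(2m_i-2m)}=1$ for every $i=0,1,\ldots,p$. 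Equivalently, $\theta(2m-2m_i) \in 2\pi\mathbb Z$ for each $i$, which says $e^{i\theta}$ is a $(2m-2m_i)$-th root of unity for every $i$.

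The remaining step is the number-theoretic reduction: $e^{i\theta}$ is simultaneously a $(2m-2m_i)$-th root of unity for all $i=0,\ldots,p$ if and only if it is an $L$-th root of unity, where $L = \gcd(2m-2m_0,\ldots,2m-2m_p)$. This is a standard fact: the set of common roots of unity for a finite collection of exponents $n_0,\ldots,n_p$ is precisely the group of $\gcd(n_0,\ldots,n_p)$-th roots of unity, since $e^{i\theta}$ has finite order dividing each $n_i$ exactly when its order divides their gcd. I would verify this by noting that $e^{i\theta}$ being an $n_i$-th root of unity for all $i$ forces $\theta \in \frac{2\pi}{n_i}\mathbb Z$ for all $i$, hence $\theta \in \bigcap_i \frac{2\pi}{n_i}\mathbb Z = \frac{2\pi}{L}\mathbb Z$ by the gcd characterization of the intersection of these subgroups of $2\pi\mathbb Q$, and conversely any such $\theta$ satisfies all the individual conditions.

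I do not expect a serious obstacle here; the argument is essentially bookkeeping. The only point requiring care is confirming that the indexing convention in \eqref{eq1613}—the passage between the coefficients $a_{2m-j}$, the reality relation $a_j = \overline{a_{2m-j}}$, and the chosen index set $\{m_0,\ldots,m_p\}$—correctly identifies which differences $2m-2m_i$ enter the gcd, so that the $L$ appearing in the lemma matches the $L$ defined just before it. Since $a_j$ and $a_{2m-j}$ are complex conjugates, they vanish together, so the conjugate index $2m-m_i$ contributes the exponent $2m - 2(2m-m_i) = 2m_i - 2m = -(2m-2m_i)$, which generates the same cyclic constraint and thus does not enlarge or shrink $L$; this symmetry is what makes the definition of $L$ consistent and is the one place I would state explicitly rather than leave implicit.
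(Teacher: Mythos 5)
Your proposal is correct and follows essentially the same route as the paper: both reduce the identity $H(e^{i\theta}z_1)=H(z_1)$, via the expansion \eqref{eq1613} and independence of the distinct monomials, to the conditions $\exp(i(2m-2m_j)\theta)=1$ for $j=0,\ldots,p$, and then to the $L$-root-of-unity condition via the gcd. Your explicit treatment of the gcd step and of the conjugate-index symmetry $a_j=\overline{a_{2m-j}}$ only spells out what the paper leaves implicit.
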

\begin{proof}
We have
\begin{equation*}
H(\exp (i\theta)z_1)=a_m|z_1|^{2m}+2 \sum_{j=0}^{p}\paren{|z_1|^{2j}\mathrm{Re}\bra{a_{m_j}\exp (i(2m-2m_j)\theta)z_1^{2m-2m_j}}}
\end{equation*}
for all $z_1\in \mathbb C$. Hence, we conclude that $H(\exp (i\theta)z_1)=H(z_1)$ for all $z_1 \in \mathbb C$  if and only if $\exp (i(2m-2m_j)\theta)=1$ for every $j=0,\ldots,p$, which proves the assertion.
\end{proof}
\begin{proof}[Proof of Theorem \ref{T2}]
For $t\in\mathbb R$, $\lambda>0$, and any $L$-root of unity $\exp (i\theta)$, consider the mappings
\begin{equation*}
\begin{split}
T^1_t&: ~(z_1,z_2)\mapsto (z_1+it,z_2);\\
T^2_t&: ~(z_1,z_2)\mapsto (z_1,z_2+it);\\
R_{\theta}&: ~ (z_1,z_2)\mapsto (e^{i\theta}z_1,z_2);\\
S_\lambda&:~ (z_1,z_2)\mapsto (\lambda z_1,\lambda^{2m} z_2).
\end{split}
\end{equation*}
It is easy to check that $T^2_t, R_{\theta}, S_\lambda$ are in $\mathrm{Aut}(M_H)$ and moreover $T^1_t\in \mathrm{Aut}(M_H)$ if $H(z_1)=(\mathrm{Re}~z_1)^{2m}$ for all $z_1\in \mathbb C$. Now let $f=(f_1,f_2)$ be any biholomorphism of $M_H$. It follows from Lemma \ref{L7} that there exist $p\in \Gamma$ and $q\in \Gamma$ such that $f$ and $f^{-1}$ extend to be holomorphic in neighborhoods of $p$ and $q$, respectively, and $f(p)=q$. Replacing $f$ by its composition with reasonable translations $T^2_t, T^1_{t}$, we may assume that $p=q=(0,0)$, and there exist neighborhoods $U_1$ and $U_2$ of $(0,0)$ such that $U_2\cap \partial M_H= f(U_1\cap \partial M_H)$, and $f$ and $f^{-1}$ are holomorphic in $U_1$ and $U_2$, respectively. Moreover, $f$ is a local CR diffeomorphism between $U_1\cap \partial M_H$ and $U_2\cap \partial M_H$.

Let us denote by $\mathcal{H}=\{z\in \mathbb C: \mathrm{Re}~z<0\}$. We now define $g_1(z_2):=f_1(0,z_2)$ and $g_2(z_2):=f_2(0,z_2)$ for all $z_2\in \mathcal{H}$. It follows from Lemma \ref{L6} that $f(U_1\cap \Gamma)=U_2\cap \Gamma$. Consequently, $g_1(it)=0$ for all $-\epsilon_0<t<\epsilon_0$ with $\epsilon_0>0$ small enough. By the Schwarz Reflection Principle and the Identity Theorem, we have $g_1(z_2)= 0$ for all $z_2\in \mathcal{H}$. This also implies that $\mathrm{Re}~ f_2(0,z_2)<0$, and thus $g_2\in \mathrm{Aut}(\mathcal{H})$. Since $g_2(0)=0$, it is known that $g_2(z_2)=\dfrac{\alpha z_2}{1+i\beta z_2}$ for some $\alpha\in \mathbb R^*$ and $\beta\in\mathbb R$.  

Now we are going to prove that $f$ is biholomorphic between neighborhoods of the origin. To do this, it suffices to show that $J_f(0,0)\ne 0$~( a simillar proof shows that $J_{f^{-1}}(0,0)\ne 0$).  To derive a contradiction, we suppose that $J_f(0,0)=0$.  By the above we can write
$$
f(z_1,z_2)=\big(z_1 a(z_1,z_2), g_2(z_2)+z_1 b(z_1,z_2)\big),
$$
where $a$ and $b$ are holomorphic functions defined on neighborhoods of $(0,0)$, respectively. By shrinking $U_1$ if necessary, we can assume that $a,b$ are defined on $U_1$.

Take derivative of $f$ at points $(0,z_2)$ we have
\begin{equation*}
df(z_1,z_2)=
 \left( \begin{array}{ccc}
a (z_1,z_2)& z_1 a_{z_1}(z_1,z_2)  \\
b(z_1,z_2)& {g_2}'(z_2)+ z_1 b_{z_2}(z_1,z_2)
\end{array} \right).
\end{equation*}
Therefore we obtain $J_f(0,z_2)=a(0,z_2){g_2}'(z_2) $ for every $z_2$ small enough. We note that $J_f(0,z_2)\ne 0 $ for all $z_2\in \mathcal{H}$, ${g_2}'(0)=\alpha\ne 0$ , and $J_f(0,0)=0$. This implies that $a(z_1,z_2)=O(|z|)$. 

Since $f(z_1,z_2)\in \overline{M_H}\cap U_2$ for all $(z_1,z_2)\in \overline{M_H}\cap U_1$,
$$
\mathrm{Re} \big(g_2(z_2)+z_1 b(z_1,z_2)\big)+H\big(z_1 a(z_1,z_2)\big)\leq 0
$$
for all $(z_1,z_2)\in \overline{M_H}\cap U_1$. Because of the invariance of $\overline{M_H}$ under any map $S_t\; (t>0)$, one gets
\begin{equation}\label{eq2013-1}
\mathrm{Re} \big(g_2(t^{2m}z_2)+tz_1 b(tz_1,t^{2m}z_2)\big)+H\big(tz_1 a(tz_1,t^{2m}z_2)\big)\leq 0
\end{equation}
for every $(z_1,z_2)\in \overline{M_H}\cap U_1$ and for every $t\in (0,1)$.

Expand the function $b$ into the Taylor series at the origin so that
$$
b(z_1,z_2)=\sum_{j,k=0}^\infty b_{j,k} z_1^j z_2^k,
$$
where $b_{j,k}\in \mathbb C$ for all $j,k\in \mathbb N$. Hence the equation (\ref{eq2013-1}) can be re-written as
\begin{equation}\label{eq2013-2}
\begin{split}
\rho\circ f(tz_1, t^{2m}z_2)&=
\mathrm{Re} \Big(\alpha\frac{t^{2m}z_2}{1+i\beta t^{2m}z_2}+tz_1\sum_{j,k=0}^\infty b_{j,k}(tz_1)^j(t^{2m}z_2)^k\Big)\\
&\quad +H(tz_1 a(tz_1,t^{2m}z_2))\leq 0
\end{split}
\end{equation}
for every $(z_1,z_2)\in \overline{M_H}\cap U_1$ and for every $t\in (0,1)$.

Now let us denote by $j_0=\min\{j~| ~b_{j,0}\ne 0\}$ if $b(z_1,0)\not \equiv 0$ and $j_0=+\infty$ if otherwise. We divide the argument into three cases as follows.

\smallskip

{\bf Case 1.} {\boldmath $0\leq j_0\leq 2m-2$.}  Note that we can choose $\delta_0>0$ and $\epsilon_0>0$ such that $H(z_1)<\epsilon_0$ for all $|z_1|<\delta_0$. Since $(-\epsilon_0, z_1)\in U\cap M_H$ for all $|z_1|<\delta_0$, taking $\lim_{t\to 0^+}\frac{1}{t^{j_0+1}}\rho\circ f(tz_1, t^{2m}\epsilon_0)$ we obtain
$\mathrm{Re}(b_{j_0,0} z_1^{j_0+1})\leq 0$ for all $|z_1|<\delta_0$, which leads to a contradiction. 

\smallskip

{\bf Case 2.} {\boldmath $ j_0= 2m-1$.} It follows from (\ref{eq2013-2}) that
\begin{equation*}
\begin{split}
\lim_{t\to 0^+}\frac{1}{t^{2m}}\rho\circ f(tz_1, t^{2m}z_2)&=\mathrm{Re}(\alpha z_2+b_{2m-1,0}z_1^{2m})=0
\end{split}
\end{equation*}
 for all $(z_1,z_2)\in U_1$ with $\mathrm{Re}~z_2+H(z_1)=0$. This implies that
$H(z_1)=\mathrm{Re} (\frac{b_{2m-1,0}}{\alpha}z_1^{2m})$ for all $|z_1|<\delta_0$ with $\delta_0>0$ small enough. It is absurd since $H$ contains no harmonic terms.

\smallskip

{\bf Case 3.} {\boldmath $ j_0>2m-1$.}  Fix a point $(z_1,z_2)\in U_1\cap \partial M_H $ with $\mathrm{Re}(z_2)\ne 0$. From (\ref{eq2013-2}) one has
\begin{equation*}
\begin{split}
\lim_{t\to 0^+}\frac{1}{t^{2m}}\rho\circ f(tz_1, t^{2m}z_2)=\mathrm{Re}(\alpha z_2)=0,
\end{split}
\end{equation*}
which is impossible.

Altogether, we conclude that $f$ is a local biholomorphism between neighborhoods $U_1$ and $U_2$ of the origin satisfying $f(U_1\cap \partial M_H)=U_2\cap \partial M_H$. Therefore by \cite[Corollary 5.3, p. 909]{Kol} and the Identity Theorem, we have
$$
f(z_1,z_2)=( \lambda e^{i\theta} z_1, \lambda^{2m}z_2)
$$
for all $(z_1,z_2)\in M_H$, where $e^{i\theta}$ is an $L$-root of unity. Thus $f=S_\lambda\circ R_\theta$, and hence the proof is complete.
\end{proof}

\begin{Acknowlegement} We would like to thank Prof. Kang-Tae Kim, Prof. Do Duc Thai, and Dr. Hyeseon Kim for their precious discussions on this material.
\end{Acknowlegement}

\end{document}